\newcommand{\norm}[1]{\left\Vert#1\right\Vert}
\newcommand{\TO}[1]{\stackrel{#1}{\to}}
\newcommand{\R}{\mathbb R}
\newcommand{\N}{\mathbb N}
\newcommand{\B}{\mathbb B}
\def\RHS{right-hand side}
\def\smartqedtr{\def\qedtr{\ifmmode\triangle\else{\unskip\nobreak\hfil
\penalty50\hskip1em\null\nobreak\hfil$\triangle$
\parfillskip=0pt\finalhyphendemerits=0\endgraf}\fi}}
\begin{document}
\title
{About uniform regularity of collections of sets}
\author
{Alexander Y. Kruger \and Nguyen H. Thao}

\dedication{Dedicated to Asen Dontchev on the occasion of his 65th birthday and  Vladimir  Veliov on the occasion of his 60th birthday}

\institute{
Alexander Y. Kruger (\Letter\,)\at
Centre for Informatics and Applied Optimization,
School of Science, Information Technology and Engineering,
University of Ballarat, Ballarat, Victoria, Australia\\
\email{a.kruger@ballarat.edu.au}
\and
Nguyen H. Thao \at
Centre for Informatics and Applied Optimization,
School of Science, Information Technology and Engineering,
University of Ballarat, Ballarat, Victoria, Australia\\
\email{hieuthaonguyen@students.ballarat.edu.au, nhthao@ctu.edu.vn}
}

\date{Received: date / Accepted: date}
\journalname{}
\maketitle

\begin{abstract}
We further investigate the uniform regularity property of collections of sets via primal and dual characterizing constants. These constants play an important role in determining convergence rates of projection algorithms for solving feasibility problems.
\end{abstract}
\thispagestyle{empty}
\keywords{
uniform regularity \and projection method
}
\subclass{49J53 \and 41A25 \and 74S30}
\section{Introduction}
Regularity properties of collections of sets play an important role in several areas of variational analysis and optimization like coderivative-subdifferential calculus, constraint qualifications, stability of solutions, and convergence of numerical algorithms.

Various regularity properties of collections of sets have proved to be useful: \emph{(bounded) linear regularity} \cite{BauBor93,BauBor96,BakDeuLi05,BauBorLi99,ZheNg08, LiNgPon07,NgZan07,ZheWeiYao10,BauBorTse00,BurDeng05}, \emph{metric inequality} \cite{Iof89,Iof00_,NgaiThe01}, \emph{(strong) conical hull intersection property} \cite{ChuDeuWar90,ChuDeuWar92,DeuLiWar99,BauBorLi99, BakDeuLi05,LiNgPon07,BauBorTse00}, \emph{Jameson's property $(G)$} \cite{BauBorLi99,Kus87}.
We refer the readers to \cite{BauBorLi99,Kru06.1,BakDeuLi05} for the relationships between these properties and the overview of the areas of their applications in analysis and optimization.

The \emph{uniform regularity} property introduced recently in \cite{Kru05.1} and further developed in \cite{Kru06.1,Kru09.1,KruLop12.1} is stronger than local linear regularity even in the convex case.
It corresponds to the \emph{metric regularity} property of set-valued mappings and is closely related to the \emph{(extended) extremal principle}.
The most recent development is the application of this property in convergence analysis of projection algorithms by Lewis et al. \cite{LewLukMal09}, Attouch et al. \cite{AttBolRedSou10}, Luke \cite{Luk12,Luk13}, and Hesse and Luke \cite{HesLuk}.

Uniform regularity of a collection of sets in a normed linear space is characterized quantitatively in \cite{Kru05.1,Kru06.1,Kru09.1,KruLop12.1} by certain nonnegative constants defined in terms of elements of the primal or dual spaces.
In the setting of a finite dimensional Euclidean space, Lewis et al. \cite{LewLukMal09} introduced another nonnegative constant characterizing the uniform regularity of a collection of two sets and used it when formulating convergence rates of averaged and alternating projections.

In the current note, we consider a (not necessarily nonnegative) modification of the constant from \cite{LewLukMal09} in the setting of an arbitrary Hilbert space and establish its relationship with the dual space constant from \cite{Kru05.1,Kru06.1,Kru09.1,KruLop12.1}.
The latter constant admits a simplified equivalent representation in Hilbert spaces.
As an application, we employ these constants to establish convergence results of projection algorithms.

The structure of the paper is as follows.
In Section \ref{S20}, we recall the uniform regularity property of a finite collection of sets in a normed linear space, its main characterizations and connections with some other properties.
In Section~\ref{S2}, we consider the case of a collection of two sets in a Hilbert space and establish the relationship between the dual space constants from \cite{Kru05.1,Kru06.1,Kru09.1,KruLop12.1} and \cite{LewLukMal09}.
The final Section \ref{S3} is dedicated to the convergence estimates of projection algorithms.

Our basic notation is standard, cf. \cite{Mor06.1,RocWet98}.
For a normed linear space $X$, its topological dual is denoted $X^*$ while $\langle\cdot,\cdot\rangle$ denotes the bilinear form defining the pairing between the two spaces.
The closed unit ball and the unit sphere in a normed space are denoted $\B$ and $\mathbb{S}$, respectively.
$B_\delta(x)$ stands for the closed ball with radius $\delta$ and center $x$.

\section{Uniform regularity of a collection of sets}\label{S20}
In this section, we recall the \emph{uniform regularity} property of a finite collection $\bold{\Omega}:=\{\Omega_1,\Omega_2,\ldots,\Omega_m\}$ ($m>1$) of sets in a normed linear space $X$ near a given point $\bar{x}\in \cap_{i=1}^m\Omega_i$.
The property was introduced in \cite{Kru05.1} (under a different name) and further developed in \cite{Kru06.1,Kru09.1,KruLop12.1}.
\begin{definition}\label{defUR}
$\bold{\Omega}$ is \emph{uniformly regular} at $\bar{x}$ if there exist numbers $\delta,\alpha >0$ such that
$$
\bigcap_{i=1}^m(\Omega_i-\omega_i-a_i)\  \bigcap\  (\rho\mathbb{B}) \neq \emptyset
$$
for any $\rho \in (0;\delta], \omega_i \in \Omega_i \cap B_{\delta}(\bar{x})$ and $a_i \in (\alpha\rho)\mathbb{B}$, $i=1,2,\ldots,m$.
\end{definition}

Uniform regularity of a collection of sets can be equivalently characterized in terms of certain nonnegative constants:
$$
\theta_{\rho}[\bold{\Omega}](\bar{x}):= \sup\left\{r \geq 0\left|\; \bigcap_{i=1}^m (\Omega_i - a_i) \bigcap B_{\rho}(\bar{x}) \neq \emptyset, \max_{1\le i\le m}\|a_i\|\le r\right.\right\},\;\rho \in (0;\infty],
$$
$$
\hat{\theta}[\bold{\Omega}](\bar{x}):= \liminf_{\rho \downarrow 0,\;\omega_i \TO{\Omega_i} \bar{x}\; (1\le i\le m)} \dfrac{\theta_{\rho}[\Omega_1-\omega_1,\Omega_2-\omega_2,\ldots,\Omega_m-\omega_m](0)}{\rho}.
$$
Here $\omega_i \TO{\Omega_i} \bar{x}$ means that $\omega_i\to\bar{x}$ with $\omega_i\in\Omega_i$.

These constants characterize the mutual arrangement of sets $\Omega_i$ $(1\le i\le m)$ in the primal space and are convenient for defining their extremality, stationarity and regularity properties.

The next proposition follows directly from the definitions.
\begin{proposition}
$\bold{\Omega}$ is uniformly regular at $\bar{x}$ if and only if $\hat{\theta}[\bold{\Omega}](\bar{x})>0$.
\end{proposition}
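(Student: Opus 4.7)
The plan is to show the equivalence by rewriting both sides as quantitative conditions on the function $\rho \mapsto \theta_\rho[\Omega_1-\omega_1,\ldots,\Omega_m-\omega_m](0)$, which is precisely the bridge between Definition~\ref{defUR} and the constant $\hat{\theta}[\bold{\Omega}](\bar{x})$. The key observation is that, by the very definition of $\theta_\rho$, the inequality $\theta_\rho[\Omega_1-\omega_1,\ldots,\Omega_m-\omega_m](0) \ge r$ is equivalent (up to a harmless passage to values strictly smaller than the supremum) to the assertion that $\bigcap_{i=1}^m(\Omega_i-\omega_i-a_i) \cap (\rho\mathbb{B}) \neq \emptyset$ holds for all $a_i$ with $\max_i\|a_i\| \le r$. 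Both conditions in the proposition can be rephrased through this single quantity.

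For the direction ``uniform regularity $\Rightarrow \hat{\theta}>0$,'' I would fix the witnesses $\delta,\alpha>0$ from Definition~\ref{defUR}. Then for every $\rho\in(0,\delta]$ and every $\omega_i\in\Omega_i\cap B_\delta(\bar{x})$, the defining intersection is nonempty for \emph{all} $a_i\in(\alpha\rho)\mathbb{B}$; hence $\alpha\rho$ belongs to the set whose supremum defines $\theta_\rho[\Omega_1-\omega_1,\ldots,\Omega_m-\omega_m](0)$, yielding
$$
\frac{\theta_\rho[\Omega_1-\omega_1,\ldots,\Omega_m-\omega_m](0)}{\rho}\ge\alpha.
$$
Taking the $\liminf$ as $\rho\downarrow 0$ and $\omega_i\TO{\Omega_i}\bar{x}$ gives $\hat{\theta}[\bold{\Omega}](\bar{x})\ge\alpha>0$.

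For the converse, suppose $\hat{\theta}[\bold{\Omega}](\bar{x})>0$ and fix any $\alpha\in(0,\hat{\theta}[\bold{\Omega}](\bar{x}))$. By the definition of $\liminf$, there exists $\delta>0$ such that for every $\rho\in(0,\delta]$ and every choice of $\omega_i\in\Omega_i\cap B_\delta(\bar{x})$,
$$
\theta_\rho[\Omega_1-\omega_1,\ldots,\Omega_m-\omega_m](0)>\alpha\rho.
$$
Because the supremum defining $\theta_\rho$ is strictly larger than $\alpha\rho$, there must exist some $r>\alpha\rho$ realizing the intersection condition; in particular, for every $a_i\in(\alpha\rho)\mathbb{B}$ we have $\max_i\|a_i\|\le r$, so $\bigcap_{i=1}^m(\Omega_i-\omega_i-a_i)\cap(\rho\mathbb{B})\neq\emptyset$. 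This is exactly the assertion of Definition~\ref{defUR} with constants $\delta$ and $\alpha$.

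There is no real obstacle here; the only subtlety is making sure the passage from the strict inequality $\theta_\rho>\alpha\rho$ (a consequence of the strict inequality in the definition of $\liminf$) to the nonemptiness of the intersection for \emph{all} $a_i\in(\alpha\rho)\mathbb{B}$ is handled cleanly, which is immediate from the supremum structure of $\theta_\rho$. The rest is just unpacking definitions.
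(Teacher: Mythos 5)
Your proof is correct and is exactly the direct unpacking of Definition~\ref{defUR} and the definitions of $\theta_\rho$ and $\hat{\theta}$ that the paper has in mind when it states that the proposition ``follows directly from the definitions'' (the paper gives no written proof). Your handling of the one genuine subtlety --- passing from $\theta_\rho > \alpha\rho$ to the nonemptiness of the intersection for \emph{all} $a_i \in (\alpha\rho)\mathbb{B}$ via the supremum structure --- is sound.
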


When constant $\hat{\theta}[\bold{\Omega}](\bar{x})$ is positive, it provides a quantitative characterization of the uniform regularity property.
It coincides with the supremum of all $\alpha$ in Definition~\ref{defUR}.

The case $\hat{\theta}[\bold{\Omega}](\bar{x})=0$, i.e., the absence of the uniform regularity, corresponds to \emph{approximate stationarity} \cite{Kru03.1,Kru04.1,Kru05.1,Kru06.1,Kru09.1} of $\bold{\Omega}$ at $\bar x$, the latter property being a relaxation of the  \emph{extremality} property introduced and investigated in \cite{KruMor80.3}.
We refer the reader to \cite[Section~3]{KruLop12.1} for a modern summary of extremality, stationarity, and regularity conditions for finite collections of sets.

Another nonnegative primal space constant (being a slight modification of the corresponding one introduced in \cite{Kru05.1}) can be used for characterizing the uniform regularity:
$$
\hat{\vartheta}[\bold{\Omega}](\bar{x}):= \liminf_{\substack{x\to\bar{x},\;x_i \to 0\;(1\le i\le m)\\x\notin\bigcap\limits_{i=1}^m(\Omega_i-x_i)}} \frac{\max\limits_{1\leq i\leq m}d(x+x_i,\Omega_i)} {d\left(x,\bigcap\limits_{i=1}^m(\Omega_i-x_i)\right)}.
$$

The next proposition corresponds to \cite[Theorem~1]{Kru05.1}.

\begin{proposition}\label{dualconstant}
$\hat{\theta}[\bold{\Omega}](\bar{x}) = \hat{\vartheta}[\bold{\Omega}](\bar{x})$.

As a consequence, $\bold{\Omega}$ is uniformly regular at $\bar{x}$ if and only if $\hat{\vartheta}[\bold{\Omega}](\bar{x})>0$.
\end{proposition}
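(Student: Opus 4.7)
The plan is to prove two opposite inequalities, $\hat{\theta}[\bold{\Omega}](\bar{x}) \ge \hat{\vartheta}[\bold{\Omega}](\bar{x})$ and $\hat{\vartheta}[\bold{\Omega}](\bar{x}) \ge \hat{\theta}[\bold{\Omega}](\bar{x})$, by directly translating between the two characterizations. Both directions rely on the same bookkeeping trick: given a ``test configuration'' for one constant, construct a corresponding configuration for the other by identifying $\omega_i$ with nearest points of $\Omega_i$ to perturbations of $\bar{x}$, and identifying $a_i$ with displacements $x_i$ (up to a shift by $\omega_i - \bar x$). The main technical obstacle is keeping straight which small-parameter neighborhoods are needed in each direction, and dealing with the fact that the infimum defining $d(\cdot,\Omega_i)$ need not be attained, which forces working up to an $\varepsilon$-slack and letting $\varepsilon \downarrow 0$.

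For $\hat{\vartheta} \ge \hat{\theta}$, I would fix $\alpha<\hat{\theta}[\bold{\Omega}](\bar x)$ and take any $x$ near $\bar x$, $x_i$ near $0$ with $x \notin \bigcap_i(\Omega_i-x_i)$. Set $\eta := \max_i d(x+x_i,\Omega_i)>0$, pick, for each $\varepsilon>0$ and each $i$, some $\omega_i \in \Omega_i$ with $\|\omega_i-(x+x_i)\|<\eta+\varepsilon$, and put $a_i := x+x_i-\omega_i$, $\rho := (\eta+\varepsilon)/\alpha$. Since $\omega_i\to\bar{x}$ as $x\to\bar x$, $x_i\to 0$ and $\rho\to 0$, the definition of $\hat{\theta}$ yields $\theta_\rho[\Omega_1-\omega_1,\ldots,\Omega_m-\omega_m](0) > \alpha\rho$ eventually, so $\max_i\|a_i\|<\alpha\rho$ forces some $y\in B_\rho(0)$ with $y+a_i\in\Omega_i-\omega_i$ for every $i$; unraveling, $z := x+y$ lies in $\bigcap_i(\Omega_i-x_i)$ and $\|z-x\|\le\rho=(\eta+\varepsilon)/\alpha$. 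Hence $\alpha\, d(x,\bigcap_i(\Omega_i-x_i))\le \eta+\varepsilon$, and letting $\varepsilon \downarrow 0$ then taking the liminf gives $\hat{\vartheta}[\bold{\Omega}](\bar x)\ge \alpha$.

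For $\hat{\theta} \ge \hat{\vartheta}$, I would fix $\alpha<\hat{\vartheta}[\bold{\Omega}](\bar x)$, take $\rho$ small and $\omega_i\in\Omega_i$ close to $\bar x$, and pick arbitrary $a_i$ with $\max_i\|a_i\|<\alpha\rho$; the goal is to produce $y\in B_\rho(0)$ with $y+a_i\in\Omega_i-\omega_i$ for all $i$. I would apply the definition of $\hat{\vartheta}$ at the specific point $x := \bar{x}$ and with $x_i' := a_i+\omega_i-\bar{x}$, which are small by the triangle inequality. If $\bar x \in \bigcap_i(\Omega_i-x_i')$, then $y=0$ works; otherwise, using $d(\bar x+x_i',\Omega_i)=d(\omega_i+a_i,\Omega_i)\le\|a_i\|$ together with $\hat{\vartheta}>\alpha$, one gets $d(\bar x,\bigcap_i(\Omega_i-x_i'))<\max_i\|a_i\|/\alpha<\rho$, producing the required $y$.

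Combining the two inequalities yields the identity, and the consequence about uniform regularity then follows immediately from the previously stated proposition characterizing it by positivity of $\hat{\theta}[\bold{\Omega}](\bar x)$. The main pitfall I expect is making the neighborhood quantifiers line up cleanly: in both directions I must verify that, as the perturbations in the source configuration shrink, the induced perturbations in the target configuration also stay in the appropriate neighborhood required by the corresponding liminf, which is a careful but ultimately routine triangle-inequality estimate once the correspondence between $(x,x_i)$ and $(\omega_i,a_i,\rho)$ is set up correctly.
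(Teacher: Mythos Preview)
The paper does not supply its own proof of this proposition; it simply records it as corresponding to \cite[Theorem~1]{Kru05.1} and moves on. Your direct two-inequality argument is the natural route and is essentially how the original proof in \cite{Kru05.1} proceeds.

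One technical wrinkle worth tightening in the direction $\hat{\vartheta}\ge\hat{\theta}$: as written you fix $\varepsilon>0$ first and then speak of ``$\omega_i\to\bar x$ as $x\to\bar x$, $x_i\to0$ and $\rho\to0$''. But with $\varepsilon$ held fixed, $\rho=(\eta+\varepsilon)/\alpha$ stays bounded away from $0$ and $\omega_i$ only lands within $\varepsilon$ of $\bar x$, not arbitrarily close; so you cannot invoke the liminf defining $\hat\theta$ along that route. The clean fix is either to couple $\varepsilon$ to the configuration (e.g.\ take $\varepsilon=\eta$, so $\rho=2\eta/\alpha\to0$ and $\|\omega_i-\bar x\|\le 2\eta+\|x-\bar x\|+\|x_i\|\to0$), or---more transparently---to first unpack $\hat\theta[\bold{\Omega}](\bar x)>\alpha$ into the existence of $\delta>0$ such that $\theta_\rho[\Omega_1-\omega_1,\ldots,\Omega_m-\omega_m](0)>\alpha\rho$ for all $\rho\in(0,\delta]$ and $\omega_i\in\Omega_i\cap B_\delta(\bar x)$, and then check that your $\omega_i$ and $\rho$ land in this range once $x$, $x_i$ and $\varepsilon$ are sufficiently small. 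You already flag this quantifier bookkeeping as the main pitfall; just make it explicit. The second direction is fine as stated.
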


It was shown in \cite{Kru05.1,Kru06.1,Kru09.1} that the uniform regularity of a collection of sets can be interpreted as the direct analogue of the fundamental in variational analysis \emph{metric regularity} property of set-valued mappings.

Regularity properties can also be characterized in terms of elements of the dual space using appropriate concepts of \emph{normal} elements.
Given a subset $\Omega$ of $X$, a point $\bar x$ in $\Omega$, and a number $\delta \geq 0$, the sets (cf. \cite{Kru03.1,Mor06.1})
\begin{gather*}
N_{\Omega}(\bar x) := \left\{x^* \in X^* \mid \limsup_{x\TO{\Omega}\bar x} \frac {\langle x^*,x-\bar x \rangle}{\|x-\bar x\|} \leq 0 \right\},
\\
\widehat N_\Omega(\bar x,\delta):= \bigcup_{x\in\Omega\cap B_{\delta}(\bar x)} N_\Omega(x),
\\
\overline{N}_{\Omega}(\bar x) :=  \limsup_{x \TO{\Omega} \bar x}N_{\Omega}(x)= \bigcap_{\delta>0}{\rm cl}^*\widehat N_{\Omega}(\bar x,\delta)
\end{gather*}
denote the \emph{Fr\'echet normal cone}, the \emph{strict $\delta$-normal cone}, and the \emph{limiting normal cone} to $\Omega$ at $\bar x$, respectively.
The denotation $u\TO{\Omega}x$ in the above formulas means that $u\to{x}$ with $u\in\Omega$ while ${\rm cl}^*$ denotes the sequential weak$^*$ closure in $X^*$.

In the Asplund space setting, the uniform regularity of a collection of sets can be characterized using the next dual space constant:
\begin{equation}\label{constant1'}
\hat{\eta}[\bold{\Omega}](\bar{x}):= \lim_{\delta\downarrow 0}\inf\left\{\norm{\sum_{i=1}^m x_i^*} \mid x_i^* \in \widehat N_{\Omega_i}(\bar x,\delta), \sum_{i=1}^m\norm{x_i^*} =1\right\},
\end{equation}
where it is assumed that the infimum over the empty set equals $1$;
this corresponds to all cones $\widehat N_{\Omega_i}(\bar x,\delta)$ $(1\le i\le m)$ being trivial for some $\delta>0$ ($\bar x$ can be an interior point of $\cap_{i=1}^m\Omega_i$.)

The next theorem corresponds to \cite[Theorem~4 (v)--(vi)]{Kru09.1}.

\begin{theorem}\label{ol0}
\begin{enumerate}
\item
$\hat\theta[\bold{\Omega}](\bar{x})\le
\hat\eta[\bold{\Omega}](\bar{x})$.
\item
Suppose $X$ is Asplund and the sets $\Omega_i$ $(1\le i\le{m})$ are closed.
Then
$\hat\theta[\bold{\Omega}](\bar{x})=
\hat\eta[\bold{\Omega}](\bar{x})$.

As a consequence, $\bold{\Omega}$ is uniformly regular at $\bar{x}$ if and only if $\hat{\eta}[\bold{\Omega}](\bar{x})>0$, i.e.,
there exist $\alpha>0$ and $\delta>0$ such that
\begin{equation}\label{NUR}
\left\|\sum\limits_{i=1}^m{x}_i^*\right\|\ge
\alpha\sum\limits_{i=1}^m\|x_i^*\|
\end{equation}
for all $x_i\in\Omega_i\cap{B}_\delta(\bar{x})$ and
$x_i^*\in{N}_{\Omega_i}(x_i)$ ($1\le i\le{m}$).
\end{enumerate}
\end{theorem}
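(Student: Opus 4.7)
The plan is to handle the two assertions separately. Part (i) is a direct primal--dual duality calculation at the level of the Fr\'echet normal cone, while part (ii) needs the Asplund-space machinery of Ekeland's variational principle together with fuzzy sum and maximum rules, applied to $\hat\theta$ via its equivalent form $\hat\vartheta$ supplied by Proposition~\ref{dualconstant}.

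For part (i), fix $\alpha\in(0,\hat\theta[\bold{\Omega}](\bar x))$ and use the definition of $\hat\theta$ to pick $\delta>0$ such that $\theta_\rho[\Omega_1-\omega_1,\ldots,\Omega_m-\omega_m](0)\ge\alpha\rho$ whenever $\rho\in(0,\delta)$ and $\omega_i\in\Omega_i\cap B_\delta(\bar x)$. Let $x_i^*\in\widehat N_{\Omega_i}(\bar x,\delta/2)$ with $\sum_i\|x_i^*\|=1$, and pick $\omega_i\in\Omega_i\cap B_{\delta/2}(\bar x)$ with $x_i^*\in N_{\Omega_i}(\omega_i)$. For $\epsilon>0$ the definition of the Fr\'echet normal cone supplies a single $\rho\in(0,\delta)$ such that $\langle x_i^*,u-\omega_i\rangle\le\epsilon\|u-\omega_i\|$ for every $i$ and every $u\in\Omega_i$ with $\|u-\omega_i\|\le(1+\alpha)\rho$. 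The primal hypothesis then gives, for any $a_i\in(\alpha\rho)\B$, a point $y\in\rho\B$ with $u_i:=\omega_i+y+a_i\in\Omega_i$; since $\|u_i-\omega_i\|\le(1+\alpha)\rho$ the Fr\'echet bound yields $\langle x_i^*,y+a_i\rangle\le\epsilon(1+\alpha)\rho$, so summing over $i$ gives
\[
\Big\langle\sum_i x_i^*,y\Big\rangle+\sum_i\langle x_i^*,a_i\rangle\le m\epsilon(1+\alpha)\rho.
\]
Picking $a_i=\alpha\rho\,\hat a_i$ with $\|\hat a_i\|\le 1$ and $\langle x_i^*,\hat a_i\rangle\ge(1-\tau)\|x_i^*\|$ makes $\sum_i\langle x_i^*,a_i\rangle\ge(1-\tau)\alpha\rho$, while $\langle\sum_i x_i^*,y\rangle\ge-\|\sum_i x_i^*\|\rho$. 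Dividing by $\rho$ and letting $\tau,\epsilon\downarrow 0$ yields $\|\sum_i x_i^*\|\ge\alpha$, hence $\hat\eta\ge\alpha$ and in turn $\hat\eta\ge\hat\theta$.

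For part (ii) it is enough to prove $\hat\vartheta[\bold{\Omega}](\bar x)\ge\hat\eta[\bold{\Omega}](\bar x)$. Fix $\alpha\in(0,\hat\eta)$ and let $\delta>0$ be as in the definition of $\hat\eta$. Consider $x$ near $\bar x$ and vectors $x_i$ of small norm with $x\notin\bigcap_i(\Omega_i-x_i)$, and form the $1$-Lipschitz function $\phi(u):=\max_i d(u+x_i,\Omega_i)$, whose zero-level set coincides with $\bigcap_i(\Omega_i-x_i)$. Set $M:=\phi(x)>0$ and $d:=d(x,\{\phi=0\})$. For a small $\gamma\in(0,1)$, Ekeland's variational principle applied to $\phi$ with radius $(1-\gamma)d$ produces $\tilde x$ with $\|\tilde x-x\|\le(1-\gamma)d$ and $\phi(\tilde x)>0$, minimising $u\mapsto\phi(u)+\frac{M}{(1-\gamma)d}\|u-\tilde x\|$. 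Combining the fuzzy sum rule with the Fr\'echet maximum rule for $\phi$ and the Asplund-space representation of Fr\'echet subgradients of distance functions to closed sets then yields, for any $\zeta>0$, weights $\lambda_i\ge 0$ with $\sum_i\lambda_i=1$, points $\omega_i\in\Omega_i$ arbitrarily close to $\tilde x+x_i$, and unit-norm Fr\'echet normals $v_i^*\in N_{\Omega_i}(\omega_i)$ satisfying
\[
\Big\|\sum_i\lambda_i v_i^*\Big\|\le\frac{M}{(1-\gamma)d}+\zeta.
\]
Setting $y_i^*:=\lambda_i v_i^*$ one has $\sum_i\|y_i^*\|=1$; choosing $x,x_i$ small enough keeps every $\omega_i$ inside $B_\delta(\bar x)$, so the defining property of $\hat\eta$ gives $\|\sum_i y_i^*\|\ge\alpha$. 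Combining both bounds and letting $\gamma,\zeta\downarrow 0$ yields $M\ge\alpha d$, hence $\hat\vartheta\ge\alpha$ and finally $\hat\vartheta\ge\hat\eta$.

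The main obstacle is the last step of part (ii): one must ensure that the fuzzy machinery delivers vectors with the exact normalisation $\sum_i\|\lambda_i v_i^*\|=1$ demanded by the definition of $\hat\eta$, and that every constructed base point $\omega_i$ can simultaneously be placed inside $B_\delta(\bar x)$ uniformly in the auxiliary small parameters. Both rely on the Asplund-space fact that Fr\'echet subgradients of $d(\cdot,\Omega)$ at a point outside a closed set $\Omega$ are (fuzzily) unit-norm Fr\'echet normals to $\Omega$ at nearly nearest points; this is exactly where closedness of the $\Omega_i$ and the Asplund hypothesis enter.
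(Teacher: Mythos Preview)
The paper does not actually prove Theorem~\ref{ol0}: the sentence immediately following the statement attributes part~(i) to \cite{Kru04.1} and part~(ii) to \cite{Kru09.1}, and no argument is supplied in the paper itself. So there is no in-paper proof to compare against; one can only check your proposal for correctness and compare it with the standard approach in those cited sources.

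Your argument for part~(i) is correct. Choosing the perturbations $a_i$ nearly aligned with $x_i^*$ and using the common $y$ supplied by the primal regularity inequality is precisely the standard primal-to-dual translation, and is how \cite{Kru04.1} proceeds.

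Your strategy for part~(ii) --- reduce to $\hat\vartheta$ via Proposition~\ref{dualconstant}, apply Ekeland's variational principle to $\phi(u)=\max_i d(u+x_i,\Omega_i)$, then invoke the Asplund-space fuzzy sum and max rules together with the subdifferential description of the distance function --- is exactly the method of \cite{Kru09.1} and is the standard route to such primal--dual equalities. You have correctly isolated the only genuinely delicate point: arranging the fuzzy calculus to deliver $v_i^*\in N_{\Omega_i}(\omega_i)$ with $\|v_i^*\|=1$, so that the normalisation $\sum_i\|y_i^*\|=\sum_i\lambda_i=1$ is exact. This hinges on the fact that every Fr\'echet subgradient of $d(\cdot,\Omega)$ at a point \emph{outside} a closed set $\Omega$ has norm exactly~$1$, together with the Asplund-space fuzzy link between such subgradients and Fr\'echet normals at nearly nearest points. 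Both facts are established in the references the paper cites, and with them your argument goes through. One small point worth making explicit in a full write-up: the fuzzy max rule assigns positive weight $\lambda_i$ only to active indices, and for those one has $d(\tilde x+x_i,\Omega_i)=\phi(\tilde x)>0$, so the ``point outside $\Omega_i$'' hypothesis needed for the unit-norm claim is indeed satisfied where it is used.
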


The dual characterization of the uniform regularity in Theorem~\ref{ol0} (ii) is sometimes referred to as \emph{(Fr\'echet) normal uniform regularity}, cf. \cite{Kru09.1,KruLop12.1}.
Constant $\hat{\eta}[\bold{\Omega}](\bar{x})$ coincides with the supremum of all $\alpha$ in the definition of this property.

Part (i) of Theorem~\ref{ol0} was proved in \cite{Kru04.1}, while part (ii) was established in \cite{Kru09.1}. A slightly weaker estimate can be found in \cite{Kru04.1,Kru06.1}.

\begin{remark}
In finite dimensions, constant \eqref{constant1'} coincides with the corresponding one defined in terms of limiting normals:
\begin{align}\notag
\bar\eta[\bold{\Omega}](\bar x):=& \min\left\{\norm{\sum_{i=1}^m x_i^*} \mid x_i^* \in \overline{N}_{\Omega_i}(\bar x), \sum_{i=1}^m\norm{x_i^*} =1\right\}
\end{align}
(with the similar natural convention about the minimum over the empty set.) The dual uniform regularity criterion in Theorem~\ref{ol0} (ii) takes the following ``exact'' (``at the point'') form:
\begin{center}
\emph{there exists $\alpha>0$ such that \eqref{NUR} holds true for all
$x_i^*\in\overline{N}_{\Omega_i}(\bar x)$ ($1\le i\le{m}$)},
\end{center}
or equivalently,
\begin{equation*}
\left.
\begin{array}{ll}
x_i^*\in\overline{N}_{\Omega_i}(\bar x)\; (1\le i\le{m})\\
x_1^*+x_2^*+\ldots+x_n^*=0
\end{array}
\right\}
\quad\Longrightarrow\quad x_1^*=x_2^*=\ldots=x_n^*=0.
\end{equation*}
This is a well known qualification condition, cf. \cite[Corollary 3.37]{Mor06.1}.
\end{remark}

Apart from the formulated in Theorem~\ref{ol0} (ii) necessary and sufficient characterization of the uniform regularity, equality $\hat\theta[\bold{\Omega}](\bar{x})=
\hat\eta[\bold{\Omega}](\bar{x})$ implies also an equivalent characterization of approximate stationarity.
\begin{corollary}[Extended extremal principle \cite{Kru03.1,Kru04.1}]\label{EEP}
Suppose $X$ is Asplund and the sets $\Omega_i$ $(1\le i\le{m})$ are closed.
$\bold{\Omega}$ is approximately stationary at $\bar{x}$ if and only if $\hat{\eta}[\bold{\Omega}](\bar{x})=0$, i.e.,
for any $\varepsilon>0$ there exist
$x_i\in\Omega_i\cap{B}_\varepsilon(\bar{x})$ and
$x_i^*\in{N}_{\Omega_i}(x_i)$ ($1\le i\le{m}$) such that
\begin{equation*}
\left\|\sum\limits_{i=1}^m{x}_i^*\right\|<\varepsilon
\sum\limits_{i=1}^m\|x_i^*\|.
\end{equation*}
\end{corollary}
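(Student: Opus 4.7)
The corollary is essentially a repackaging of Theorem~\ref{ol0}(ii): once we know that $\hat\theta[\bold{\Omega}](\bar x)$ and $\hat\eta[\bold{\Omega}](\bar x)$ coincide in the Asplund, closed setting, the equivalence between approximate stationarity and the $\varepsilon$-condition follows almost by definition-chasing. Accordingly, my plan has two logical steps: identify approximate stationarity with vanishing of the primal constant $\hat\theta$, transport this to $\hat\eta$ via Theorem~\ref{ol0}(ii), and then unfold the definition of $\hat\eta$ to obtain the explicit $\varepsilon$-statement.

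\textbf{Step 1 (Reformulation via the primal constant).} As noted in the paragraph following Proposition~\ref{dualconstant}, approximate stationarity of $\bold{\Omega}$ at $\bar x$ is, by definition, the failure of uniform regularity, i.e.\ $\hat\theta[\bold{\Omega}](\bar x)=0$. Under the Asplund and closedness hypotheses, Theorem~\ref{ol0}(ii) gives $\hat\theta[\bold{\Omega}](\bar x)=\hat\eta[\bold{\Omega}](\bar x)$, so approximate stationarity is equivalent to $\hat\eta[\bold{\Omega}](\bar x)=0$. This step is immediate once the right results are cited.

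\textbf{Step 2 (Unfolding $\hat\eta[\bold{\Omega}](\bar x)=0$).} By the definition in \eqref{constant1'}, since the infimum inside the limit is monotone nondecreasing as $\delta\downarrow 0$, the equality $\hat\eta[\bold{\Omega}](\bar x)=0$ means that for every $\varepsilon>0$ there exists $\delta\in(0,\varepsilon]$ and functionals $\tilde x_i^*\in\widehat N_{\Omega_i}(\bar x,\delta)$ with $\sum_{i=1}^m\|\tilde x_i^*\|=1$ and $\|\sum_{i=1}^m\tilde x_i^*\|<\varepsilon$. By definition of $\widehat N_{\Omega_i}(\bar x,\delta)$, each $\tilde x_i^*$ lies in $N_{\Omega_i}(x_i)$ for some $x_i\in\Omega_i\cap B_\delta(\bar x)\subseteq\Omega_i\cap B_\varepsilon(\bar x)$. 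Using the normalization $\sum_{i=1}^m\|\tilde x_i^*\|=1$, the inequality becomes $\|\sum_{i=1}^m\tilde x_i^*\|<\varepsilon\sum_{i=1}^m\|\tilde x_i^*\|$, which is the advertised condition.

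\textbf{Step 3 (Converse direction).} If, conversely, for every $\varepsilon>0$ one can find $x_i\in\Omega_i\cap B_\varepsilon(\bar x)$ and $x_i^*\in N_{\Omega_i}(x_i)$ with $\|\sum x_i^*\|<\varepsilon\sum\|x_i^*\|$, then (discarding the trivial all-zero case) rescale by $s:=\sum_{i=1}^m\|x_i^*\|>0$: the vectors $\tilde x_i^*:=x_i^*/s$ satisfy $\sum\|\tilde x_i^*\|=1$, $\|\sum\tilde x_i^*\|<\varepsilon$, and $\tilde x_i^*\in N_{\Omega_i}(x_i)\subseteq\widehat N_{\Omega_i}(\bar x,\varepsilon)$. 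Hence the infimum in \eqref{constant1'} corresponding to $\delta=\varepsilon$ is strictly less than $\varepsilon$; letting $\varepsilon\downarrow 0$ yields $\hat\eta[\bold{\Omega}](\bar x)=0$.

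The proof contains no real obstacle beyond bookkeeping; the only point requiring a little care is Step~2, namely choosing $\delta\le\varepsilon$ so that the base points $x_i$ of the Fr\'echet normals automatically lie in $B_\varepsilon(\bar x)$, and remembering to exclude the trivial case $\sum\|x_i^*\|=0$ in the converse. All of the analytical content has already been absorbed into Theorem~\ref{ol0}(ii).
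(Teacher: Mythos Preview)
Your proof is correct and follows exactly the approach the paper intends: the corollary is stated immediately after Theorem~\ref{ol0} with the remark that the equality $\hat\theta[\bold{\Omega}](\bar x)=\hat\eta[\bold{\Omega}](\bar x)$ ``implies also an equivalent characterization of approximate stationarity,'' and no separate proof is given. Your unpacking of the definition of $\hat\eta$ in Steps~2--3 is the routine verification the paper leaves to the reader.
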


This result extends the \emph{extremal principle} \cite{KruMor80.3,MorSha96.1} and can be considered as a generalization of the convex \emph{separation theorem} to collections of nonconvex sets.
Some earlier formulations of Corollary~\ref{EEP} can be found in \cite{Kru98.1,Kru00.1,Kru02.1}.

\begin{remark}
Corollary~\ref{EEP} provides also an equivalent characterization of Asplund spaces, cf. \cite[Theorem~5]{Kru09.1}.
Theorem~\ref{ol0} (ii) can be extended from Asplund to arbitrary Banach spaces if Fr\'echet normal cones are replaced by some other kind of normal cones satisfying certain natural properties, e.g., Clarke normal cones, cf. \cite{KruLop12.1}.
\end{remark}

\begin{remark}
Theorem~\ref{ol0} can be extended to infinite collections of sets.
This allows us to treat infinite and semi-infinite optimization problems, cf. \cite{KruLop12.1,KruLop12.2}.
\end{remark}

Verifying the uniform regularity (and several other properties) of a finite collection of sets can always be reduced to that of two sets in the product space.

\begin{proposition}[\cite{Kru05.1}, Proposition~4]\label{mTO2}
$\bold{\Omega}$ is uniformly regular at $\bar x$ if and only if the collection of two sets
\begin{equation}\label{product}
\Omega:=\Omega_1\times\Omega_2\times\ldots\times\Omega_m \quad\mbox{and}\quad L:=\{(x,x,\ldots,x)\mid x\in X\}
\end{equation}
in $X^m$ (with any norm compatible with that in $X$) is uniformly regular at the point $(\bar x,\bar x,\ldots,\bar x)$.
\end{proposition}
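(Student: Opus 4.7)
The plan is to verify both implications directly from Definition~\ref{defUR}, equipping $X^m$ with the compatible max norm $\|\mathbf y\|=\max_i\|y_i\|$; the case of a general compatible norm follows by norm equivalence. The key structural observation is that membership in $L$ forces a vector to have identical coordinates, and the product structure of $\Omega$ decouples into the coordinate conditions $y_i+\omega_i+a_i\in\Omega_i$. Writing $\bar{\mathbf x}:=(\bar x,\ldots,\bar x)$, these two facts together reduce the two-set regularity condition in $X^m$ to a system that essentially matches the $m$-set condition in $X$, up to a careful accounting of perturbations.

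For the ``if'' direction (pair regularity in $X^m$ implies uniform regularity of $\bold\Omega$), given data $\rho,\omega_i,a_i$ as in Definition~\ref{defUR} applied to $\bold\Omega$, I would feed into the pair-regularity condition the choices $\boldsymbol\omega:=(\omega_1,\ldots,\omega_m)\in\Omega$, $\ell:=\bar{\mathbf x}\in L$, $\mathbf a:=(a_1,\ldots,a_m)$, $\mathbf b:=0$. This produces $\mathbf y\in\rho\mathbb B$ with $\mathbf y+\boldsymbol\omega+\mathbf a\in\Omega$ and $\mathbf y+\bar{\mathbf x}\in L$. The second relation forces all $y_i$ to agree at some common $z\in X$, whence the first becomes $z+\omega_i+a_i\in\Omega_i$ with $\|z\|\le\rho$, as desired.

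For the converse, given data $\rho,\boldsymbol\omega,\ell,\mathbf a,\mathbf b$ with $\ell=(u,\ldots,u)$, the constraint $\mathbf y+\ell+\mathbf b\in L$ can be parametrized by a single $c\in X$ via $y_i:=c-b_i$, and then the remaining constraint becomes $c+\omega_i+(a_i-b_i)\in\Omega_i$. Setting $\tilde a_i:=a_i-b_i$, I would apply the uniform regularity of $\bold\Omega$ with a shrunken target radius $\rho':=2\alpha'\rho/\alpha$, where $\alpha,\delta$ are the constants for $\bold\Omega$ and $\alpha',\delta'$ are the constants to be chosen for the pair; this yields $c\in\rho'\mathbb B$ with the required coordinate inclusions, and hence $\|y_i\|\le\|c\|+\|b_i\|\le\rho'+\alpha'\rho$.

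The only real work is the constant calibration: one must pick $\alpha',\delta'$ so that (a) $\omega_i\in\Omega_i\cap B_\delta(\bar x)$ and $\rho'\le\delta$ hold whenever the pair-regularity hypotheses are satisfied with $\rho\le\delta'$, (b) $\|\tilde a_i\|\le 2\alpha'\rho\le\alpha\rho'$, so the uniform regularity of $\bold\Omega$ applies, and (c) the resulting bound $\rho'+\alpha'\rho$ on $\|y_i\|$ does not exceed $\rho$. A short arithmetic check shows that the choices $\alpha':=\alpha/(2+\alpha)$ and $\delta':=\delta$ suffice, which completes the proof. This constant-bookkeeping step is the only subtle point; the structural reduction itself is essentially immediate from the definitions of $\Omega$ and $L$.
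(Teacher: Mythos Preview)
The paper does not supply a proof of Proposition~\ref{mTO2}; it is quoted from \cite[Proposition~4]{Kru05.1} without argument. Your direct verification from Definition~\ref{defUR} is correct. In the ``if'' direction you should state explicitly that you take $\delta:=\delta'$ and $\alpha:=\alpha'$ (the pair-regularity constants), after which the reduction via $\mathbf b=0$ and $\ell=\bar{\mathbf x}$ goes through exactly as you describe. In the converse direction your parametrization $y_i=c-b_i$ and the calibration $\alpha':=\alpha/(2+\alpha)$, $\delta':=\delta$, $\rho':=2\alpha'\rho/\alpha$ are sound: one has $\|\tilde a_i\|\le 2\alpha'\rho=\alpha\rho'$, $\rho'=2\rho/(2+\alpha)\le 2\delta/(2+\alpha)<\delta$, and finally $\|y_i\|\le\rho'+\alpha'\rho=\alpha'\rho\,\dfrac{2+\alpha}{\alpha}=\rho$, so $\mathbf y\in\rho\mathbb B$ as required. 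The passage to an arbitrary compatible norm on $X^m$ via equivalence with the max norm (valid since $m$ is finite) is also fine, though it changes the constants $\alpha',\delta'$ by fixed factors. Since the paper offers no proof to compare against, your argument stands on its own as a complete and correct justification.
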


Note the following simple representations of the Fr\'echet normal cones to the sets in \eqref{product}.

\begin{proposition}\label{P4}
\begin{enumerate}
\item
Suppose $x_i\in\Omega_i$ ($1\le i\le m$). Then
$$
N_{\Omega}(z)=\prod_{i=1}^mN_{\Omega_i}(x_i),
$$
where $z=(x_1,x_2,\ldots,x_m)$.
\item
Suppose $x\in X$. Then
$$
N_{L}(z)=L^\perp =\left\{z^*=(x_1^*,\ldots,x_m^*)\in (X^*)^m\mid \sum_{i=1}^m x_i^*=0\right\},
$$
where $z=Ax:= (x,x,\ldots,x)$.
\end{enumerate}
\end{proposition}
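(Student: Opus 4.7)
My plan is to verify both parts directly from the definition of the Fréchet normal cone, after fixing a compatible product norm on $X^m$ — I will take the max norm $\|(w_1,\ldots,w_m)\| := \max_i \|w_i\|$, noting that Proposition~\ref{mTO2} allows any compatible product norm and all such norms are equivalent, so the $\limsup$ appearing in the normal-cone definition is unaffected by this choice.

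For part (i), I prove both inclusions. For $\prod_{i=1}^m N_{\Omega_i}(x_i) \subseteq N_\Omega(z)$: given $z^*=(x_1^*,\ldots,x_m^*)$ with each $x_i^*\in N_{\Omega_i}(x_i)$ and any $\varepsilon>0$, I use the definition of each individual Fréchet cone to pick a common $\delta>0$ such that $\langle x_i^*,w_i-x_i\rangle \le \varepsilon\|w_i-x_i\|$ whenever $w_i\in\Omega_i\cap B_\delta(x_i)$. Summing over $i$ for $w=(w_1,\ldots,w_m)\in\Omega\cap B_\delta(z)$ and using $\|w_i-x_i\|\le\|w-z\|$ yields $\langle z^*,w-z\rangle\le m\varepsilon\|w-z\|$, which suffices since $\varepsilon$ is arbitrary. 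For the reverse inclusion, I fix $j$ and apply the $\limsup$ definition along coordinate paths $w$ with $w_j\TO{\Omega_j} x_j$ and $w_i=x_i$ for $i\ne j$; then $\|w-z\|=\|w_j-x_j\|$ and $\langle z^*,w-z\rangle=\langle x_j^*,w_j-x_j\rangle$, so the inequality collapses directly to $x_j^*\in N_{\Omega_j}(x_j)$.

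For part (ii), I would invoke the standard fact that on any linear subspace $L$ of a normed space one has $N_L(z)=L^\perp$ at every $z\in L$. The inclusion $L^\perp\subseteq N_L(z)$ is immediate, since $\langle z^*,w-z\rangle=0$ for all $w\in L$ whenever $z^*\in L^\perp$. Conversely, if $z^*\in N_L(z)$ and $v\in L\setminus\{0\}$, then testing the definition along the curves $w=z\pm tv\in L$ as $t\downarrow 0$ gives $\pm\langle z^*,v\rangle\le 0$, forcing $\langle z^*,v\rangle=0$. The concrete description of $L^\perp$ for the diagonal subspace then follows from the identity $\langle z^*,Ax\rangle=\sum_{i=1}^m\langle x_i^*,x\rangle=\left\langle\sum_{i=1}^m x_i^*,\,x\right\rangle$, which vanishes for every $x\in X$ if and only if $\sum_{i=1}^m x_i^*=0$. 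There is essentially no obstacle here; both statements are textbook product- and subspace-calculus rules for Fréchet normals, with the only mild care being the choice of product norm, handled by the opening remark.
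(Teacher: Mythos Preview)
Your proof is correct and follows essentially the same approach the paper indicates: the paper's own proof merely says part~(i) ``follows directly from the definition'' and part~(ii) is ``a simple exercise on application of standard tools of convex analysis,'' and your argument is a careful, explicit unpacking of exactly those two remarks. The only cosmetic difference is that for~(ii) you argue directly from the Fr\'echet normal-cone definition rather than appealing to convex-analysis machinery, which is if anything slightly more self-contained.
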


\begin{proof}
The first assertion follows directly from the definition while proving the second one is a simple exercise on application of standard tools of convex analysis.
\qed\end{proof}

\section{Uniform regularity in a Hilbert space}\label{S2}

In this section, we limit ourselves to the case when $X$ is a Hilbert space.
For the collection of sets $\bold{\Omega}=\{\Omega_1,\Omega_2,\ldots,\Omega_m\}$ ($m>1$), denote
\begin{equation}\label{ets}
\hat c[\bold{\Omega}](\bar x) :=1-2(\hat\eta[\bold{\Omega}](\bar x))^2,
\end{equation}
where $\hat\eta[\bold{\Omega}](\bar x)$ is the dual space regularity constant defined by \eqref{constant1'}.
By Theorem~\ref{ol0}~(ii), the uniform regularity of $\bold{\Omega}$ at $\bar x$ is equivalent to the inequality $\hat c[\bold{\Omega}](\bar x)<1$.
Note that constant \eqref{ets} can be negative: $\hat c[\bold{\Omega}](\bar x)\ge-1$.

\begin{lemma}\label{l1}
Suppose $\bold{\Omega}$ is uniformly regular at $\bar x$. Then, for any $c'>\hat c[\bold{\Omega}](\bar x)$, there is $\delta>0$ such that, for any $i,j\in\{1,2,\ldots,m\}$, $i\ne j$, and any $u\in\widehat N_{\Omega_i}(\bar x,\delta)\cap \mathbb{S},v\in \widehat N_{\Omega_j}(\bar x,\delta)\cap \mathbb{S}$, it holds:
\begin{equation}\label{lta}
-\langle u,v\rangle<c'.
\end{equation}
\end{lemma}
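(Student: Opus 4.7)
The plan is to combine the quantitative dual characterization of uniform regularity encoded by the constant $\hat\eta[\bold{\Omega}](\bar x)$ with the Hilbert space identity $\|u+v\|^2 = 2 + 2\langle u,v\rangle$ valid whenever $u,v\in\mathbb{S}$. This identity is what lets the specific form of \eqref{ets} be extracted: squaring an estimate on the norm of a sum of two unit normals produces exactly a bound on $-\langle u,v\rangle$ of the shape $1-2(\cdot)^2$.

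Given $c'>\hat c[\bold{\Omega}](\bar x)=1-2(\hat\eta[\bold{\Omega}](\bar x))^2$, I would first use continuity of $t\mapsto 1-2t^2$ to pick $\hat\eta'$ with $0\le\hat\eta'<\hat\eta[\bold{\Omega}](\bar x)$ and $1-2(\hat\eta')^2<c'$; the uniform regularity hypothesis $\hat\eta[\bold{\Omega}](\bar x)>0$ (Theorem~\ref{ol0}~(ii)) guarantees that this window is nonempty. By the definition \eqref{constant1'} of $\hat\eta[\bold{\Omega}](\bar x)$ as a monotone limit of infima, this $\hat\eta'$ produces $\delta>0$ such that
\begin{equation*}
\left\|\sum_{k=1}^m x_k^*\right\|\ge\hat\eta'\quad\text{whenever } x_k^*\in\widehat N_{\Omega_k}(\bar x,\delta) \text{ and } \sum_{k=1}^m\|x_k^*\|=1.
\end{equation*}

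Next, for any $i\ne j$ and $u\in\widehat N_{\Omega_i}(\bar x,\delta)\cap\mathbb{S}$, $v\in\widehat N_{\Omega_j}(\bar x,\delta)\cap\mathbb{S}$, I would concentrate all the unit mass on the indices $i$ and $j$ by setting $x_i^*:=u/2$, $x_j^*:=v/2$, and $x_k^*:=0$ for $k\notin\{i,j\}$. Each $\widehat N_{\Omega_k}(\bar x,\delta)$ is a cone containing the origin (since $\bar x\in\Omega_k$), so this collection is admissible in the above infimum and satisfies $\sum_k\|x_k^*\|=1$. The displayed inequality then gives $\|u+v\|/2\ge\hat\eta'$, hence $\|u+v\|^2\ge 4(\hat\eta')^2$. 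Expanding via the inner product and using $\|u\|=\|v\|=1$ turns this into $2+2\langle u,v\rangle\ge 4(\hat\eta')^2$, i.e. $-\langle u,v\rangle\le 1-2(\hat\eta')^2<c'$, which is \eqref{lta}.

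The proof is essentially a direct substitution, so there is no serious obstacle. The only conceptual subtlety is the admissibility of the weight distribution $(1/2,1/2,0,\ldots,0)$ in the infimum defining $\hat\eta[\bold{\Omega}](\bar x)$: one must rely on $0\in\widehat N_{\Omega_k}(\bar x,\delta)$ for every $k$ in order to reduce an $m$-term estimate to a two-term inner-product bound, after which the Hilbert space structure does the rest of the work.
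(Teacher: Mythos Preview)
Your proof is correct and follows essentially the same route as the paper's: both choose $\delta$ so that the infimum in \eqref{constant1'} exceeds a threshold tied to $c'$, then plug in $x_i^*=u/2$, $x_j^*=v/2$, $x_k^*=0$ otherwise, and expand $\|u+v\|^2$ in the Hilbert inner product. The only cosmetic difference is that you introduce an intermediate parameter $\hat\eta'$ whereas the paper works directly with the inequality $2\bigl\|\sum_k x_k^*\bigr\|^2>1-c'$.
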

\begin{proof}
By definition \eqref{constant1'}, for any $c'>\hat c[\bold{\Omega}](\bar x))$, there is $\delta>0$ such that
$$
2\norm{\sum_{k=1}^m x_k^*}^2>1-c'\mbox{ for all }x_k^* \in \widehat N_{\Omega_k}(\bar x,\delta)\mbox{ with } \sum_{k=1}^m\norm{x_k^*} =1.
$$
Choose any $i,j\in\{1,2,\ldots,m\}$, $i\ne j$, and any $u\in\widehat N_{\Omega_i}(\bar x,\delta)\cap \mathbb{S},v\in \widehat N_{\Omega_j}(\bar x,\delta)\cap \mathbb{S}$.
Set $x_i^*=u/2$, $x_j^*=v/2$, and $x_k^*=0$ for $k\in\{1,2,\ldots,m\}\setminus\{i,j\}$.
Then $x_k^* \in \widehat N_{\Omega_k}(\bar x,\delta)$ ($k\in\{1,2,\ldots,m\}$) and $\sum_{k=1}^m\norm{x_k^*} =1$.
It follows that
$$
\|u+v\|^2=4\norm{\sum_{k=1}^m x_k^*}^2>2(1-c'),
$$
or equivalently
$$
2+2\langle u,v\rangle>2(1-c').
$$
In its turn, the last inequality is equivalent to \eqref{lta}.
\qed\end{proof}

In the rest of the section, we assume that $m=2$, i.e., $\bold{\Omega}=\{\Omega_1,\Omega_2\}$.
Definition \eqref{constant1'} of the constant characterizing the uniform regularity of a collection of sets can be simplified.

\begin{proposition}\label{n=2}
The following representation holds true:
\begin{equation}\label{n=2e}
\hat{\eta}[\bold{\Omega}](\bar{x})= \lim_{\delta \downarrow 0}\inf\left\{\norm{x_1^*+x_2^*} \mid x_i^* \in \widehat N_{\Omega_i}(\bar x,\delta), \norm{x_i^*}=\frac{1}{2}\; (i=1,2)\right\},
\end{equation}
where it is assumed that the infimum over the empty set equals $1$.
\end{proposition}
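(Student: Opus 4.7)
Writing $A(\delta)$ for the infimum in \eqref{constant1'} at scale $\delta$ (restricted to $m=2$, so the constraint is $\|x_1^*\|+\|x_2^*\|=1$) and $B(\delta)$ for the infimum on the right-hand side of \eqref{n=2e}, the plan is to prove $A(\delta)=B(\delta)$ for every $\delta>0$ and then pass to the limit.

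The inequality $A(\delta)\le B(\delta)$ is immediate: the feasibility set in $B(\delta)$ is contained in the feasibility set in $A(\delta)$, and the convention about empty sets is consistent in the two directions (the only way the $A$-set can be empty is that $\widehat N_{\Omega_1}(\bar x,\delta)=\widehat N_{\Omega_2}(\bar x,\delta)=\{0\}$, in which case the $B$-set is also empty). For the reverse inequality, the main step is to show that every pair $(x_1^*,x_2^*)$ admissible for $A(\delta)$ admits a pair $(y_1^*,y_2^*)$ admissible for $B(\delta)$ with $\|y_1^*+y_2^*\|\le\|x_1^*+x_2^*\|$. The key point here is that each $\widehat N_{\Omega_i}(\bar x,\delta)$ is a cone (being a union of the cones $N_{\Omega_i}(x)$), so it is stable under independent positive rescaling of $x_1^*$ and $x_2^*$.

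Concretely, for $a:=\|x_1^*\|\in(0,1)$, set $u=x_1^*/a\in\widehat N_{\Omega_1}(\bar x,\delta)\cap\mathbb{S}$, $v=x_2^*/(1-a)\in\widehat N_{\Omega_2}(\bar x,\delta)\cap\mathbb{S}$, and take $y_i^*=u/2$, $y_2^*=v/2$. Expanding both squared norms via the Hilbert-space identity $\|z_1+z_2\|^2=\|z_1\|^2+\|z_2\|^2+2\langle z_1,z_2\rangle$ one obtains
\begin{equation*}
\|x_1^*+x_2^*\|^2-\|y_1^*+y_2^*\|^2
=\bigl(\tfrac12-2a(1-a)\bigr)\bigl(1-\langle u,v\rangle\bigr),
\end{equation*}
which is $\ge 0$ because $2a(1-a)\le\tfrac12$ and $\langle u,v\rangle\le 1$. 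The boundary cases $a\in\{0,1\}$ are handled separately: when, say, $a=0$, then $\|x_1^*+x_2^*\|=1$, and, provided the $B$-feasibility set is non-empty, any unit vector $w\in\widehat N_{\Omega_1}(\bar x,\delta)$ produces $y_1^*=w/2$, $y_2^*=x_2^*/2$ with $\|y_1^*+y_2^*\|^2=\tfrac12+\tfrac12\langle w,x_2^*\rangle\le 1$; if the $B$-feasibility set is empty then $\widehat N_{\Omega_1}(\bar x,\delta)=\{0\}$, forcing $A(\delta)=1=B(\delta)$.

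The only genuine work is verifying the displayed identity and noting the two simple sign conditions on its factors; this is the ``main obstacle,'' but it is light and uses exclusively the inner-product structure, which is why the simplification works in Hilbert spaces and not in a general normed setting. Once $A(\delta)=B(\delta)$ is established for all $\delta>0$, taking $\delta\downarrow 0$ yields \eqref{n=2e}.
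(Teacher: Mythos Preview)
Your proof is correct and follows essentially the same route as the paper's: both reduce to showing that the normalized pair $(y_1^*,y_2^*)=(x_1^*/(2\|x_1^*\|),\,x_2^*/(2\|x_2^*\|))$ satisfies $\|y_1^*+y_2^*\|\le\|x_1^*+x_2^*\|$, and both prove this via the same algebraic identity---your factor $\tfrac12-2a(1-a)$ is exactly the paper's $\tfrac12(\|x_1^*\|-\|x_2^*\|)^2$, and your $\langle u,v\rangle$ is the paper's $\langle x_1^*,x_2^*\rangle/(\|x_1^*\|\|x_2^*\|)$. Your treatment of the degenerate cases ($a\in\{0,1\}$ and empty feasibility sets) is slightly more explicit than the paper's, but otherwise the arguments coincide.
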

\begin{proof}
If, for some $\delta>0$, one of the cones $\widehat N_{\Omega_1}(\bar x,\delta)$ or $\widehat N_{\Omega_2}(\bar x,\delta)$ is trivial, then $\hat{\eta}[\bold{\Omega}](\bar{x})=1$ and the equality is satisfied automatically.
Take arbitrary nonzero $x_1^*\in \widehat N_{\Omega_1}(\bar x,\delta)$ and $x_2^*\in \widehat N_{\Omega_2}(\bar x,\delta)$ such that $\|x_1^*\|+\|x_2^*\|=1$.
Then
\begin{align*}
(\|x_1^*\|-\|x_2^*\|)^2 &=\|x_1^*\|^2+\|x_2^*\|^2-2\|x_1^*\|\|x_2^*\|,
\\
1&=\|x_1^*\|^2+\|x_2^*\|^2+2\|x_1^*\|\|x_2^*\|.
\end{align*}
Hence,
\begin{align*}
\|x_1^*\|^2+\|x_2^*\|^2 &=\frac{1+(\|x_1^*\|-\|x_2^*\|)^2}{2},
\\
\|x_1^*\|\|x_2^*\| &=\frac{1-(\|x_1^*\|-\|x_2^*\|)^2}{4}.
\end{align*}
Set
$$
z_1^*:=\frac{x_1^*}{2\|x_1^*\|}
\quad\mbox{and}\quad
z_2^*:=\frac{x_2^*}{2\|x_2^*\|}.
$$
Then $z_1^*\in \widehat N_{\Omega_1}(\bar x,\delta)$, $z_2^*\in \widehat N_{\Omega_2}(\bar x,\delta)$, $\norm{z_i^*}=\norm{z_2^*}=\frac{1}{2}$, and
$$
\norm{z_1^*+z_2^*}^2 =\frac{1}{2}+\frac{\langle x_1^*,x_2^*\rangle} {2\norm{x_1^*}\norm{x_2^*}}.
$$
Next we show that
$$
\norm{x_1^*+x_2^*}\ge \norm{z_1^*+z_2^*}.
$$
Indeed,
\begin{align*}
\norm{x_1^*+x_2^*}^2 &-\norm{z_1^*+z_2^*}^2 =\norm{x_1^*}^2+\norm{x_2^*}^2+2\langle x_1^*,x_2^*\rangle-\frac{1}{2}-\frac{\langle x_1^*,x_2^*\rangle} {2\norm{x_1^*}\norm{x_2^*}}
\\
&=\frac{1+(\|x_1^*\|-\|x_2^*\|)^2}{2}-\frac{1}{2}+2\langle x_1^*,x_2^*\rangle-\frac{\langle x_1^*,x_2^*\rangle} {2\norm{x_1^*}\norm{x_2^*}}
\\
&=\frac{(\|x_1^*\|-\|x_2^*\|)^2}{2} +\frac{4\norm{x_1^*}\norm{x_2^*}-1} {2\norm{x_1^*}\norm{x_2^*}}\langle x_1^*,x_2^*\rangle
\\
&=\frac{(\|x_1^*\|-\|x_2^*\|)^2}{2} -\frac{(\|x_1^*\|-\|x_2^*\|)^2} {2\norm{x_1^*}\norm{x_2^*}}\langle x_1^*,x_2^*\rangle
\\
&=\frac{(\|x_1^*\|-\|x_2^*\|)^2}{2} \left(1 -\frac{\langle x_1^*,x_2^*\rangle} {\norm{x_1^*}\norm{x_2^*}}\right)\ge0.
\end{align*}
This completes the proof.
\qed\end{proof}

The following example shows that the conclusion of Proposition~\ref{n=2} is not true in non-Hilbert spaces.
\begin{example} Consider $\R^2$ with the sum norm, $\|(x,y)\|=|x|+|y|$, and take $\Omega_1=\{(x_1,x_2)\mid x_2\le 0\}$, $\Omega_2=\{(x_1,x_2)\mid x_2\ge 2x_1\}$ and $\bar x =(0,0)\in \Omega_1\cap\Omega_2$. Then, for any $\delta>0$, we have
$$
\widehat N_{\Omega_1}(\bar x,\delta)= \{t(0,1)\mid t\in \R^+\},
$$
$$
\widehat N_{\Omega_2}(\bar x,\delta)= \{t(2,-1)\mid t\in \R^+\}.
$$
Thus,
$$
z_1^* \in \widehat N_{\Omega_1}(\bar x,\delta) \mbox{ with } \|z_1^*\|=\frac{1}{2} \Longrightarrow z_1^*=(0,1/2),
$$
$$
z_2^* \in \widehat N_{\Omega_2}(\bar x,\delta) \mbox{ with } \|z_2^*\|=\frac{1}{2} \Longrightarrow z_2^*=(1/3,-1/6),
$$
and the right-hand side of \eqref{n=2e} equals $\|z_1^*+z_2^*\| = \|(1/3,1/3)\| =2/3$.
At the same time, with $x_1^*=(0,1/4) \in \widehat N_{\Omega_1}(\bar x,\delta)$ and $x_2^*=(\frac{1}{2},-1/4) \in \widehat N_{\Omega_2}(\bar x,\delta)$ it holds $\|x_1^*\|+\|x_2^*\| = 1$ and $\|x_1^*+x_2^*\| = \frac{1}{2}$.
Hence,
$
\hat{\eta}[\bold{\Omega}](\bar{x})\le\|x_1^*+x_2^*\|< 2/3.
$
\qedtr\end{example}

The next proposition provides an equivalent representation of constant \eqref{ets}.

\begin{proposition}\label{vid}
The following representation holds true:
\begin{equation}\label{ba}
\hat c[\bold{\Omega}](\bar x)= \lim_{\delta\downarrow0}\sup\left\{-\langle x_1^*,x_2^*\rangle\mid x_i^*\in \widehat N_{\Omega_i}(\bar x,\delta), \norm{x_i^*}=1\; (i=1,2)\right\}.
\end{equation}
where it is assumed that the supremum over the empty set equals $-1$.
\end{proposition}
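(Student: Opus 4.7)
The plan is to reduce the identity to Proposition~\ref{n=2} by a simple algebraic change of variables between unit vectors and vectors of norm $1/2$, using the Hilbert space polarization $\|u+v\|^2=\|u\|^2+\|v\|^2+2\langle u,v\rangle$.

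First, I would fix $\delta>0$ and focus on the finite-$\delta$ versions of the two expressions. By Proposition~\ref{n=2}, the relevant pairs for $\hat\eta$ are $(z_1^*,z_2^*)$ with $z_i^*\in\widehat N_{\Omega_i}(\bar x,\delta)$ and $\|z_i^*\|=1/2$. The map $(z_1^*,z_2^*)\mapsto(x_1^*,x_2^*):=(2z_1^*,2z_2^*)$ is a bijection between these pairs and the pairs appearing on the right-hand side of \eqref{ba}, because $\widehat N_{\Omega_i}(\bar x,\delta)$ is a cone. For such paired vectors, the polarization identity gives
\begin{equation*}
\|z_1^*+z_2^*\|^2=\tfrac14\|x_1^*+x_2^*\|^2=\tfrac14\bigl(2+2\langle x_1^*,x_2^*\rangle\bigr)=\tfrac{1+\langle x_1^*,x_2^*\rangle}{2},
\end{equation*}
so $1-2\|z_1^*+z_2^*\|^2=-\langle x_1^*,x_2^*\rangle$.

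Next, I would pass to the infimum/supremum. Because squaring is monotone on nonnegative numbers,
\begin{equation*}
\Bigl(\inf_{z_1^*,z_2^*}\|z_1^*+z_2^*\|\Bigr)^2=\inf_{z_1^*,z_2^*}\|z_1^*+z_2^*\|^2,
\end{equation*}
and the previous identity turns this infimum into a supremum:
\begin{equation*}
1-2\inf_{z_1^*,z_2^*}\|z_1^*+z_2^*\|^2=\sup_{x_1^*,x_2^*}\bigl(-\langle x_1^*,x_2^*\rangle\bigr).
\end{equation*}
Applying this at each $\delta>0$, then letting $\delta\downarrow0$, and invoking the definition \eqref{ets} together with Proposition~\ref{n=2}, I obtain \eqref{ba}.

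The only step that requires care is the empty-set convention and the existence of the limit. If for some $\delta>0$ one of $\widehat N_{\Omega_i}(\bar x,\delta)$ contains no nonzero element, then both sets of admissible pairs on the two sides are empty simultaneously; the conventions $\inf\emptyset=1$ (yielding $\hat\eta=1$, hence $\hat c=-1$) and $\sup\emptyset=-1$ match, and for all smaller $\delta$ the sets remain empty, so both limits equal $-1$. Otherwise the cones $\widehat N_{\Omega_i}(\bar x,\delta)$ are monotone nonincreasing in $\delta$, so the inf is monotone nondecreasing and the sup monotone nonincreasing; both limits as $\delta\downarrow0$ therefore exist, and the identity above transfers to the limit. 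I do not anticipate any genuine obstacle: once the $z\leftrightarrow x$ substitution is made, everything is routine linear algebra in a Hilbert space, and Proposition~\ref{n=2} has already absorbed the non-Hilbert difficulty.
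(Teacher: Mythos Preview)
Your proposal is correct and follows essentially the same route as the paper: both use Proposition~\ref{n=2} to pass to pairs with $\|z_i^*\|=\tfrac12$, apply the Hilbert identity $1-2\|z_1^*+z_2^*\|^2=-\langle 2z_1^*,2z_2^*\rangle$, rescale via the cone property, and handle the trivial-cone case by the matching conventions. Your write-up is simply more explicit about the limit/monotonicity justification and the empty-set case than the paper's three-line computation.
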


\begin{proof}
If, for some $\delta>0$, one of the cones $\widehat N_{\Omega_1}(\bar x,\delta)$ or $\widehat N_{\Omega_2}(\bar x,\delta)$ is trivial, then $\hat{\eta}[\bold{\Omega}](\bar{x})=1$, the \RHS\ of \eqref{ba} equals $-1$ and coincides with $\hat{c}[\bold{\Omega}](\bar{x})$ computed in accordance with definition \eqref{ets}.
Let both cones be nontrivial for any $\delta>0$.
Then, by \eqref{ets}, \eqref{n=2e}, and \eqref{ba},
\begin{align*}
\hat c[\bold{\Omega}](\bar x) &=\lim_{\delta\downarrow0}\sup \left\{1-2\norm{x_1^*+x_2^*}^2
\mid x_i^*\in \widehat N_{\Omega_i}(\bar x,\delta), \norm{x_i^*}=\frac{1}{2}\; (i=1,2)\right\}
\\
&=\lim_{\delta\downarrow0}\sup \left\{-\langle 2x_1^*,2x_2^*\rangle
\mid x_i^*\in \widehat N_{\Omega_i}(\bar x,\delta), \norm{x_i^*}=\frac{1}{2}\; (i=1,2)\right\}
\\
&=\hat{c}[\bold{\Omega}](\bar{x}).
\end{align*}
\qed\end{proof}

Another dual space constant can be used alongside \eqref{n=2e} and \eqref{ba} for characterizing the uniform regularity of a collection of two sets in a Hilbert space:
\begin{align}\label{constant2}
\hat\nu[\bold{\Omega}](\bar x):=& \lim_{\delta\downarrow0}\sup\left\{\norm{x_1^*-x_2^*}\mid x_i^*\in \widehat N_{\Omega_i}(\bar x,\delta), \norm{x_i^*}=\frac{1}{2}\; (i=1,2)\right\},
\end{align}
where it is assumed that the supremum over the empty set equals 0;
this corresponds to one of the cones $\widehat N_{\Omega_1}(\bar x,\delta)$ or $\widehat N_{\Omega_2}(\bar x,\delta)$ being trivial for some $\delta>0$ ($\bar x$ can be an interior point of either $\Omega_1$ or $\Omega_2$.)

\begin{remark}\label{unlike}
Unlike constants $\hat\eta[\bold{\Omega}](\bar x)$ and $\hat c[\bold{\Omega}](\bar x)$, the definition of constant $\hat\nu[\bold{\Omega}](\bar x)$ is specific for the case of two sets.
\end{remark}
\begin{remark}
Condition $\norm{x_i^*}=\frac{1}{2}$, $i=1,2$, in definition \eqref{constant2} cannot be replaced by $\norm{x_1^*}+\norm{x_2^*}=1$ (as in \eqref{constant1'}): it would always be equal to $1$.
\end{remark}

\begin{theorem}\label{relationship1'}
The following relations hold true:
\begin{enumerate}
\item
$(\hat\eta[\bold{\Omega}](\bar x))^2 +(\hat\nu[\bold{\Omega}](\bar x))^2=1$;
\item
$1+\hat c[\bold{\Omega}](\bar x) =2(\hat\nu[\bold{\Omega}](\bar x))^2$.
\end{enumerate}
\end{theorem}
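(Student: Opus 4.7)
The plan is to exploit that $X^*$ is a Hilbert space, so the parallelogram identity applies to the dual vectors appearing in the definitions \eqref{n=2e} of $\hat\eta[\bold{\Omega}](\bar x)$ and \eqref{constant2} of $\hat\nu[\bold{\Omega}](\bar x)$. Both constants are built by running $x_i^*$ over $\widehat N_{\Omega_i}(\bar x,\delta)$ with the \emph{same} normalization $\|x_i^*\|=\tfrac12$, which is precisely what makes the parallelogram law yield a clean conversion between them.

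For part (i), I would fix $\delta>0$ and, for any pair $(x_1^*,x_2^*)$ with $x_i^*\in\widehat N_{\Omega_i}(\bar x,\delta)$ and $\|x_i^*\|=\tfrac12$, apply the parallelogram identity
\begin{equation*}
\|x_1^*+x_2^*\|^2+\|x_1^*-x_2^*\|^2 =2\bigl(\|x_1^*\|^2+\|x_2^*\|^2\bigr)=1.
\end{equation*}
Rewriting as $\|x_1^*+x_2^*\|^2=1-\|x_1^*-x_2^*\|^2$ and taking the infimum of the \LHS\ over all such pairs (equivalently, the supremum of $\|x_1^*-x_2^*\|^2$ on the \RHS) gives, for each $\delta>0$,
\begin{equation*}
\inf\|x_1^*+x_2^*\|^2 = 1-\sup\|x_1^*-x_2^*\|^2,
\end{equation*}
where the inf and sup are taken over the same set of pairs. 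Since $\|x_1^*\pm x_2^*\|\ge0$, squaring commutes with these extrema, and passing to the limit $\delta\downarrow0$ via \eqref{n=2e} and \eqref{constant2} yields $(\hat\eta[\bold{\Omega}](\bar x))^2=1-(\hat\nu[\bold{\Omega}](\bar x))^2$, which is (i).

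Part (ii) then follows by combining (i) with the definition \eqref{ets} of $\hat c[\bold{\Omega}](\bar x)$:
\begin{equation*}
1+\hat c[\bold{\Omega}](\bar x) =2-2(\hat\eta[\bold{\Omega}](\bar x))^2 =2\bigl(1-(\hat\eta[\bold{\Omega}](\bar x))^2\bigr) =2(\hat\nu[\bold{\Omega}](\bar x))^2.
\end{equation*}

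The only subtlety, and where I expect one needs to be a little careful, is the bookkeeping of the conventions on suprema and infima over the empty set. When one of the cones $\widehat N_{\Omega_i}(\bar x,\delta)$ is trivial for some $\delta>0$, the stated conventions give $\hat\eta[\bold{\Omega}](\bar x)=1$ and $\hat\nu[\bold{\Omega}](\bar x)=0$, so both identities hold by inspection (and then $\hat c[\bold{\Omega}](\bar x)=-1$ by \eqref{ets}, matching the convention in Proposition~\ref{vid}); the substantive content of the argument lies in the case where both cones are nontrivial for every $\delta>0$, which is exactly where the parallelogram computation above is meaningful.
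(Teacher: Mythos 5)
Your proof is correct and rests on exactly the same key fact as the paper's: the parallelogram identity $\|x_1^*+x_2^*\|^2+\|x_1^*-x_2^*\|^2=1$ for normals normalized to $\|x_i^*\|=\tfrac12$, together with the empty-set conventions for the degenerate case. The only (cosmetic) difference is that you convert the pointwise identity directly into $\inf_\delta\|x_1^*+x_2^*\|^2=1-\sup_\delta\|x_1^*-x_2^*\|^2$ and pass to the limit, whereas the paper runs a two-sided $\varepsilon$-argument choosing near-extremal pairs; both are valid and yield the same conclusion.
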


\begin{proof}
If, for some $\delta>0$, one of the cones $\widehat N_{\Omega_1}(\bar x,\delta)$ or $\widehat N_{\Omega_2}(\bar x,\delta)$ is trivial, then $\hat{\eta}[\bold{\Omega}](\bar{x})=1$, $\hat{\nu}[\bold{\Omega}](\bar{x})=0$, $\hat{c}[\bold{\Omega}](\bar{x})=-1$, and equalities (i) and (ii) are satisfied automatically.
Let both cones be nontrivial for any $\delta>0$.
Fix an arbitrary $\varepsilon>0$.

(i) By definition \eqref{constant2}, there exists $\delta>0$ such that
$$
\|x_1^*-x_2^*\| \le
\hat\nu[\bold{\Omega}](\bar x)+\varepsilon
$$
for any $x_i^*\in \widehat N_{\Omega_i}(\bar x,\delta)$ with $\|x_i^*\|=\frac{1}{2}$ $(i=1,2)$.
At the same time, by \eqref{n=2e},
there are elements $x_i^*\in \widehat N_{\Omega_i}(\bar x,\delta)$ with $\|x_i^*\|=\frac{1}{2}$ $(i=1,2)$ such that
$$
\|x_1^{*}+x_2^{*}\| \le \hat\eta[\bold{\Omega}](\bar x) +\varepsilon.
$$
Hence,
$$
(\hat\eta[\bold{\Omega}](\bar x) +\varepsilon)^2+(\hat\nu[\bold{\Omega}](\bar x) +\varepsilon)^2\ge\|x_1^*-x_2^*\|^2+\|x_1^{*}+x_2^{*}\|^2 =1.
$$
Since $\varepsilon$ is arbitrary, we have $$\hat\eta[\bold{\Omega}](\bar x)^2+\hat\nu[\bold{\Omega}](\bar x)^2\ge 1.$$

Similarly, by \eqref{n=2e} and \eqref{constant2}, we find elements $x_i^*\in \widehat N_{\Omega_i}(\bar x,\delta)$ with $\|x_i^*\|=\frac{1}{2}$ $(i=1,2)$ such that
$$
\|x_1^{*}-x_2^{*}\| \ge \hat\nu[\bold{\Omega}](\bar x)-\varepsilon,
$$
$$
\|x_1^{*}+x_2^{*}\| \ge \hat\eta[\bold{\Omega}](\bar x)-\varepsilon.
$$
This yields
$$
(\hat\nu[\bold{\Omega}](\bar x)-\varepsilon)^2+(\hat\eta[\bold{\Omega}](\bar x)-\varepsilon)^2\le 1,
$$
and consequently,
$$\hat\eta[\bold{\Omega}](\bar x)^2+\hat\nu[\bold{\Omega}](\bar x)^2\le 1.$$

(ii) follows immediately from (i) and definition \eqref{ets}.
\qed\end{proof}

\begin{corollary}\label{C}
$\{\Omega_1,\Omega_2\}$ is uniformly regular at $\bar{x} \in \Omega_1\cap\Omega_2$ if and only if one of the following equivalent conditions holds true:
\begin{enumerate}
\item
$\hat\eta[\bold{\Omega}](\bar x)>0$;
\item
$\hat\nu[\bold{\Omega}](\bar x) < 1$;
\item
$\hat c[\bold{\Omega}](\bar x) < 1$.
\end{enumerate}
\end{corollary}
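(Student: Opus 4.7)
The plan is to deduce the corollary as a direct corollary of Theorem~\ref{ol0}~(ii) and Theorem~\ref{relationship1'}; no new calculation is required, only careful bookkeeping of the ranges of the three constants.

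First I would invoke the fact that every Hilbert space is Asplund, so Theorem~\ref{ol0}~(ii) applies and yields the equivalence between uniform regularity of $\bold{\Omega}=\{\Omega_1,\Omega_2\}$ at $\bar{x}$ and condition (i), namely $\hat\eta[\bold{\Omega}](\bar x)>0$. This is the one step that pulls in external content; it also covers the ``only if'' halves of (ii) and (iii) once those are shown equivalent to (i).

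Next I would show (i)$\Leftrightarrow$(ii). From definition \eqref{n=2e} it is immediate that $0\le\hat\eta[\bold{\Omega}](\bar{x})\le 1$, and from \eqref{constant2} with $\|x_1^*-x_2^*\|\le\|x_1^*\|+\|x_2^*\|=1$ one gets $0\le\hat\nu[\bold{\Omega}](\bar{x})\le 1$. Theorem~\ref{relationship1'}~(i) gives
\[
(\hat\eta[\bold{\Omega}](\bar x))^2+(\hat\nu[\bold{\Omega}](\bar x))^2=1,
\]
so, since both quantities lie in $[0,1]$, $\hat\eta[\bold{\Omega}](\bar x)>0$ if and only if $(\hat\nu[\bold{\Omega}](\bar x))^2<1$, which is equivalent to $\hat\nu[\bold{\Omega}](\bar x)<1$.

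Finally I would derive (ii)$\Leftrightarrow$(iii) from Theorem~\ref{relationship1'}~(ii), namely $1+\hat c[\bold{\Omega}](\bar x)=2(\hat\nu[\bold{\Omega}](\bar x))^2$: the inequality $\hat c[\bold{\Omega}](\bar x)<1$ is rewritten as $(\hat\nu[\bold{\Omega}](\bar x))^2<1$, which, by nonnegativity of $\hat\nu[\bold{\Omega}](\bar x)$, is equivalent to $\hat\nu[\bold{\Omega}](\bar x)<1$. Chaining the three equivalences \emph{uniform regularity}$\Leftrightarrow$(i)$\Leftrightarrow$(ii)$\Leftrightarrow$(iii) completes the proof. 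There is essentially no obstacle; the only point that must not be forgotten is justifying that both $\hat\eta[\bold{\Omega}](\bar x)$ and $\hat\nu[\bold{\Omega}](\bar x)$ are nonnegative and bounded above by $1$, which is what allows passing from squared inequalities back to linear ones.
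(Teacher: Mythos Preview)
Your argument is correct and matches the paper's intended derivation: the corollary is stated without proof immediately after Theorem~\ref{relationship1'}, and the paper has already noted, right after definition~\eqref{ets}, that by Theorem~\ref{ol0}~(ii) uniform regularity is equivalent to $\hat c[\bold{\Omega}](\bar x)<1$; combining this with the identities in Theorem~\ref{relationship1'} is exactly what you do. The only cosmetic difference is that the paper effectively enters via condition~(iii) rather than~(i), but the chain of equivalences is the same.
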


The next example shows that the equality in Theorem~\ref{relationship1'} (ii) remains true when $\hat c[\bold{\Omega}](\bar x)\le0$.

\begin{example}
In $\R^2$ with the Euclidean norm, we fix $\Omega_1=\{(x_1,x_2)\mid x_2\le 0\}$ and $\bar x=(0,0)$. Then, for any $\delta>0$, $\widehat N_{\Omega_1}(\bar x,\delta)=\{t(0,1)\mid t\ge 0\}$. We consider the following two cases of $\Omega_2$:

\emph{Case 1}. $\Omega_2=\{(x_1,x_2)\mid x_1\le 0\}$. For any $\delta>0$, $\widehat N_{\Omega_2}(\bar x,\delta)=\{t(1,0)\mid t\ge 0\}$. Then $\hat c[\bold{\Omega}](\bar x)=0$ and
$\hat\nu[\bold{\Omega}](\bar x)=\frac{\sqrt{2}}{2}$.

\emph{Case 2}. $\Omega_2=\{(x_1,x_2)\mid x_1+x_2\le0\}$. For any $\delta>0$, $\widehat N_{\Omega_2}(\bar x,\delta)=\{t(1,1)\mid t\ge 0\}$. Then $\hat c[\bold{\Omega}](\bar x)= -\frac{1}{\sqrt{2}}$ and
$\hat\nu[\bold{\Omega}](\bar x)= \frac{\sqrt{2-\sqrt{2}}}{2}$.

In both cases the equality in Theorem~\ref{relationship1'} (ii) holds true.
\qedtr\end{example}

\begin{remark}
In finite dimensions, constants \eqref{n=2e}--\eqref{ba} coincide with the corresponding ones defined in terms of limiting normals:
\begin{align}\notag
\bar\eta[\bold{\Omega}](\bar x):=& \min\left\{\norm{x_1^*+x_2^*}\mid x_i^*\in \overline{N}_{\Omega_i}(\bar x), \norm{x_i^*}=\frac{1}{2}\; (i=1,2)\right\},
\\\label{nu}
\bar\nu[\bold{\Omega}](\bar x):=& \max\left\{\norm{x_1^*-x_2^*}\mid x_i^*\in \overline{N}_{\Omega_i}(\bar x), \norm{x_i^*}=\frac{1}{2}\; (i=1,2)\right\},
\\\label{constant3}
\bar c[\bold{\Omega}](\bar x):=& \max\left\{-\langle x_1^*,x_2^*\rangle\mid x_i^*\in \overline{N}_{\Omega_i}(\bar x), \norm{x_i^*}=1\; (i=1,2)\right\}
\end{align}
(with the similar natural conventions about the minimum and maximum over the empty set.)
The relations amongst the above constants are consequences of those in Theorem~\ref{relationship1'}:
\begin{enumerate}
\item
$(\bar\eta[\bold{\Omega}](\bar x))^2 +(\bar\nu[\bold{\Omega}](\bar x))^2=1$;
\item
$1+\bar c[\bold{\Omega}](\bar x) =2(\bar\nu[\bold{\Omega}](\bar x))^2$;
\item
$1-\bar c[\bold{\Omega}](\bar x) =2(\bar\eta[\bold{\Omega}](\bar x))^2$.
\end{enumerate}
\end{remark}

\begin{remark}
Constant \eqref{constant3} is closely related with the one introduced in \cite{LewLukMal09}:
\begin{equation*}
\bar c:= \max\left\{-\langle x_1^*,x_2^*\rangle\mid x_i^*\in \overline{N}_{\Omega_i}(\bar x)\cap \mathbb{B}\; (i=1,2)\right\}.
\end{equation*}
Indeed,
$\bar c =(\bar c[\bold{\Omega}](\bar x))_+$,
where $(\alpha)_+:=\max\{\alpha,0\}$.
\end{remark}

Given a collection of $m$ sets $\bold{\Omega}=\{\Omega_1,\Omega_2,\ldots,\Omega_m\}$ in a finite dimensional Hilbert space $X$ and a point $\bar{x}\in \cap_{i=1}^m\Omega_i$, one can consider the Hilbert space $X^m$ with the norm
$$
\|(x_1,x_2,\ldots,x_n)\| =\left(\sum_{i=1}^m\|x_i\|^2\right)^{\frac{1}{2}}
$$
and compute constants \eqref{n=2e}, \eqref{ba}, and \eqref{constant2} corresponding to the collection $\bold{\Omega'}:=\{\Omega,L\}$ and the point $\bar{z}:=A\bar{x} =(\bar{x},\bar{x},\ldots,\bar{x})\in\Omega\cap L$, where $\Omega$ and $L$ are defined by \eqref{product}.

\begin{proposition}\label{rewrite}
The following representations hold true:
\begin{multline}\label{eta'}
\hat{\eta}[\bold{\Omega'}](\bar{z})= \lim_{\delta \downarrow 0}\inf \Biggl{\{}\left(\frac{1}{2}-\frac{1}{2}\left(1 -\frac{1}{m}\norm{x_1^*+\ldots+x_m^*}^2\right)^{\frac{1}{2}}\right)^{\frac{1}{2}}\mid
\\
x_i^* \in \widehat N_{\Omega_i}(\bar x,\delta)\; (1\le i\le{m}), \sum_{i=1}^m\|x_i^*\|^2=1\Biggr{\}},
\end{multline}
\begin{multline}\label{nu'}
\hat{\nu}[\bold{\Omega'}](\bar{z})= \lim_{\delta \downarrow 0}\sup \Biggl{\{}\left(\frac{1}{2}+\frac{1}{2}\left(1-\frac{1}{m}\norm{x_1^*+\ldots+x_m^*}^2\right)^{\frac{1}{2}}\right)^{\frac{1}{2}}\mid
\\
x_i^* \in \widehat N_{\Omega_i}(\bar x,\delta)\; (1\le i\le{m}), \sum_{i=1}^m\|x_i^*\|^2=1\Biggr{\}},
\end{multline}
\begin{multline}\label{c'}
\hat{c}[\bold{\Omega'}](\bar{z})= \lim_{\delta \downarrow 0}\sup \Biggl{\{}\left(1-\frac{1}{m} \norm{x_1^*+\ldots+x_m^*}^2\right)^{\frac{1}{2}}\mid \\
x_i^* \in \widehat N_{\Omega_i}(\bar x,\delta)\; (1\le i\le{m}),\;
\sum_{i=1}^m\|x_i^*\|^2=1\Biggr{\}}.
\end{multline}
\end{proposition}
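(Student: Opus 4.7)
The plan is to reduce each of the three equalities to a direct application of Propositions~\ref{n=2}, \ref{vid}, and definition~\eqref{constant2} for the two-set collection $\bold{\Omega'}=\{\Omega,L\}$ at $\bar z$ in the product Hilbert space $X^m$, combined with an explicit formula for orthogonal projection onto $L^\perp$. As a first step I would invoke Proposition~\ref{P4}: part~(ii) gives $N_L(z)=L^\perp$ for every $z\in L$, so $\widehat N_L(\bar z,\delta)=L^\perp$ for all $\delta>0$; part~(i) together with the Euclidean product norm on $X^m$ shows that $\widehat N_\Omega(\bar z,\delta)$ and $\prod_{i=1}^m\widehat N_{\Omega_i}(\bar x,\delta)$ are cofinal as $\delta\downarrow 0$, since any $z\in\Omega\cap B_\delta(\bar z)$ has each coordinate in $\Omega_i\cap B_\delta(\bar x)$ while $\prod_{i=1}^m(\Omega_i\cap B_{\delta/\sqrt m}(\bar x))\subseteq\Omega\cap B_\delta(\bar z)$, so infima and suprema over the two families agree in the limit.

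The next step is the projection identity. A one-line minimization gives $P_L(x_1^*,\ldots,x_m^*)=(\bar x^*,\ldots,\bar x^*)$ with $\bar x^*=\tfrac1m\sum_{i=1}^m x_i^*$, whence
$$
\|P_{L^\perp}z_1^*\|^2 = \|z_1^*\|^2 - \tfrac1m\Bigl\|\sum_{i=1}^m x_i^*\Bigr\|^2
$$
for $z_1^*=(x_1^*,\ldots,x_m^*)$. Since $\langle z_1^*,z_2^*\rangle=\langle P_{L^\perp}z_1^*,z_2^*\rangle$ for every $z_2^*\in L^\perp$, Cauchy--Schwarz then yields, for $z_1^*$ fixed and $z_2^*$ ranging over $L^\perp$ with $\|z_2^*\|=r$,
$$
\min_{z_2^*}\|z_1^*+z_2^*\|^2 = \|z_1^*\|^2+r^2-2r\|P_{L^\perp}z_1^*\|,
$$
$$
\max_{z_2^*}\|z_1^*-z_2^*\|^2 = \|z_1^*\|^2+r^2+2r\|P_{L^\perp}z_1^*\|,
$$
$$
\max_{z_2^*}(-\langle z_1^*,z_2^*\rangle) = r\|P_{L^\perp}z_1^*\|,
$$
each attained at a suitable scalar multiple of $\pm P_{L^\perp}z_1^*$.

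The three formulas then drop out. For \eqref{c'} I apply \eqref{ba} with $r=\|z_1^*\|=1$: the inner maximum equals $\bigl(1-\tfrac1m\|\sum_i x_i^*\|^2\bigr)^{1/2}$, and taking the outer supremum over $z_1^*$ produces \eqref{c'}. For \eqref{eta'} and \eqref{nu'} I apply \eqref{n=2e} and \eqref{constant2} with $r=\|z_1^*\|=\tfrac12$, take square roots, and rescale $x_i^*\mapsto 2x_i^*$ so that the outer constraint becomes $\sum_i\|x_i^*\|^2=1$; the minimum of $\|z_1^*+z_2^*\|^2$ becomes $\tfrac12-\tfrac12\bigl(1-\tfrac1m\|\sum_i x_i^*\|^2\bigr)^{1/2}$ and the maximum of $\|z_1^*-z_2^*\|^2$ becomes $\tfrac12+\tfrac12\bigl(1-\tfrac1m\|\sum_i x_i^*\|^2\bigr)^{1/2}$, matching \eqref{eta'} and \eqref{nu'} respectively. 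The main obstacle is purely bookkeeping: one must arrange the cofinality between $\widehat N_\Omega(\bar z,\delta)$ and $\prod_i\widehat N_{\Omega_i}(\bar x,\delta)$ so that the limit $\delta\downarrow 0$ can be passed through the infimum/supremum uniformly; once the projection identity for $\|P_{L^\perp}z_1^*\|^2$ is in hand, every remaining step is an algebraic consequence.
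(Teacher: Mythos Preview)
Your argument is correct and reaches the same conclusions, but the inner optimization is handled differently from the paper. The paper fixes $z_1^*=(x_1^*,\ldots,x_m^*)$ with $\sum_i\|x_i^*\|^2=1$ and solves $\min\{\sum_i\langle x_i^*,u_i\rangle\mid \sum_iu_i=0,\ \sum_i\|u_i\|^2=1\}$ by Lagrange multipliers, relying on compactness of the constraint set (the ambient space is taken finite dimensional just before the proposition); it then deduces \eqref{nu'} and \eqref{c'} from \eqref{eta'} via Theorem~\ref{relationship1'}. Your route replaces the Lagrange-multiplier computation by the identity $\|P_{L^\perp}z_1^*\|^2=\|z_1^*\|^2-\tfrac1m\|\sum_ix_i^*\|^2$ together with Cauchy--Schwarz in $L^\perp$, which gives the extremal values $\pm r\|P_{L^\perp}z_1^*\|$ immediately and yields all three formulas independently. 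This is shorter, avoids the compactness/finite-dimensionality hypothesis for the inner problem, and makes the geometric reason for the $(1-\tfrac1m\|\sum_ix_i^*\|^2)^{1/2}$ term transparent. Your explicit cofinality remark between $\widehat N_\Omega(\bar z,\delta)$ and $\prod_i\widehat N_{\Omega_i}(\bar x,\delta)$ is a point the paper passes over silently; it is needed in either approach and worth keeping.
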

\begin{proof}
If $z_1=(x_1,x_2,\ldots,x_n)$, $z_2=(u_1,u_2,\ldots,u_n)\in X^m$, then
$$
\|z_1+z_2\|^2 =\sum_{i=1}^m\|x_i\|^2+\sum_{i=1}^m\|u_i\|^2 +2\sum_{i=1}^m\langle x_i,u_i\rangle.
$$

By the structure of $\bold{\Omega'}$ and \eqref{n=2e}, we have
\begin{align}\notag
\hat{\eta}[\bold{\Omega'}](\bar{z})
&= \lim_{\delta \downarrow 0}\inf \Biggl\{\left(\frac{1}{2}+2\sum_{i=1}^m\langle x_i^*,u_i\rangle\right)^{\frac{1}{2}}\mid
\sum_{i=1}^m\|x_i^*\|^2 =\sum_{i=1}^m\|u_i\|^2=\frac{1}{4},
\\\notag
&\qquad\qquad\qquad\qquad x_i^* \in \widehat N_{\Omega_i}(\bar x,\delta),\sum_{i=1}^mu_i=0\; (1\le i\le{m})\Biggr\}
\\\notag
&= \lim_{\delta \downarrow 0}\inf \Biggl\{\left(\frac{1}{2}+\frac{1}{2}\sum_{i=1}^m\langle x_i^*,u_i\rangle\right)^{\frac{1}{2}}\mid
\sum_{i=1}^m\|x_i^*\|^2=\sum_{i=1}^m\|u_i\|^2=1,
\\\label{pre1}
&\qquad\qquad\qquad\qquad x_i^* \in \widehat N_{\Omega_i}(\bar x,\delta),\sum_{i=1}^mu_i=0\; (1\le i\le{m})\Biggr\}.
\end{align}

Fix any $x_i^* \in \widehat N_{\Omega_i}(\bar x,\delta)$ $(1\le i\le{m})$ with $\sum_{i=1}^m\|x_i^*\|^2=1$ and denote \begin{gather}\label{rew0}
x_0^*:=\frac{1}{m}\sum_{i=1}^m x_i^*.
\end{gather}
Consider the following minimization problem in $X^m$ which is an important component of \eqref{pre1}:
\begin{align*}
&\mbox{minimize }\quad f(u):=\sum_{i=1}^m\langle x_i^*,u_i\rangle
\\
&\mbox{subject to}\quad\sum_{i=1}^mu_i=0\mbox{ and } \sum_{i=1}^m\|u_i\|^2=1.
\end{align*}
Since $f$ is continuous and the constraint set is compact, the above problem has a solution $u^\circ=(u_1^\circ,u_2^\circ,\ldots,u_m^\circ)$.
In accordance with the Lagrange multiplier rule, there exist multiplies $\lambda_0$, $\lambda_1\in\R$ and $u^*\in X$, not all zero, such that
\begin{gather}\label{rew1}
\lambda_0x_i^*+2\lambda_1u_i^\circ+u^*=0 \quad(1\le i\le m).
\end{gather}
Adding the equalities together and taking into account that $\sum_{i=1}^mu_i^\circ=0$, we obtain
\begin{gather}\label{rew2}
\lambda_0\sum_{i=1}^mx_i^*+mu^*=0.
\end{gather}
If $\lambda_0=0$, then $u^*=0$ and consequently $\lambda_1\ne0$ and, by \eqref{rew1}, $u_i^\circ=0$ for all $i\in\{1,2,\ldots,m\}$, which is impossible thanks to $\sum_{i=1}^m\|u_i^\circ\|^2=1$.
Hence, $\lambda_0\ne0$ and we can take $\lambda_0=1$.
It follows from \eqref{rew1}, \eqref{rew2}, and \eqref{rew0} that
\begin{gather}\label{rew3}
x_i^*+2\lambda_1u_i^\circ=x_0^* \quad(1\le i\le m),
\end{gather}
and consequently
\begin{align}\notag
4\lambda_1^2 =\sum_{i=1}^m \|x_0^*-x_i^*\|^2
&=m\|x_0^*\|^2+\sum_{i=1}^m \|x_i^*\|^2-2\left\langle\sum_{i=1}^m x_i^*,x_0^*\right\rangle
\\\label{rew4}
&=\sum_{i=1}^m \|x_i^*\|^2-m\|x_0^*\|^2.
\end{align}
At the same time,
\begin{align*}
2\lambda_1f(u^\circ)=\sum_{i=1}^m\langle x_i^*,2\lambda_1u_i^\circ\rangle &=\sum_{i=1}^m\langle x_i^*,x_0^*-x_i^*\rangle
\\
&=\left(\left\langle\sum_{i=1}^m x_i^*,x_0^*\right\rangle-\sum_{i=1}^m\|x_i^*\|^2\right) \\
&=\left(m\|x_0^*\|^2-\sum_{i=1}^m\|x_i^*\|^2\right) =-4\lambda_1^2.
\end{align*}
This yields either $f(u^\circ)=-2\lambda_1$ or $\lambda_1=0$.
In the last case, by \eqref{rew3}, $x_i^*=x_0^*$ for all $i\in\{1,2,\ldots,m\}$, and consequently
\begin{gather*}
f(u^\circ)=\sum_{i=1}^m \langle x_0^*,u_i^\circ\rangle =\left\langle x_0^*,\sum_{i=1}^mu_i^\circ\right\rangle=0.
\end{gather*}
Hence, in both cases, $f(u^\circ)=-2\lambda_1$.
Since $u^\circ$ is a point of minimum, $\lambda_1$ must be nonnegative, and consequently, by \eqref{rew4},
\begin{gather*}
f(u^\circ)=-\left(\sum_{i=1}^m \|x_i^*\|^2-m\|x_0^*\|^2\right)^{\frac{1}{2}} =-\left(1-m\|x_0^*\|^2\right)^{\frac{1}{2}}.
\end{gather*}
Combining this with \eqref{pre1}, we get \eqref{eta'}.

\eqref{nu'} and \eqref{c'} follow from \eqref{eta'} thanks to Theorem~\ref{relationship1'}.
\qed\end{proof}

\begin{corollary}\label{valuerange}
The following estimates hold true:
\begin{gather*}
0\le\hat{\eta}[\bold{\Omega'}](\bar{z})\le \left(\frac{1}{2}- \frac{1}{2}\sqrt{1-\frac{1}{m}}\right)^{\frac{1}{2}};\\
\left(\frac{1}{2}+ \frac{1}{2}\sqrt{1-\frac{1}{m}}\right)^{\frac{1}{2}}
\le
\hat{\nu}[\bold{\Omega'}](\bar{z})\le1;\\ \sqrt{1-\frac{1}{m}}\le
\hat{c}[\bold{\Omega'}](\bar{z})\le1. \end{gather*}
\end{corollary}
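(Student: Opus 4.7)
The plan is to pin down the bounds on $\hat\eta[\bold{\Omega'}](\bar z)$ from the representation \eqref{eta'} and then propagate them to $\hat\nu[\bold{\Omega'}](\bar z)$ and $\hat c[\bold{\Omega'}](\bar z)$ using the two identities of Theorem~\ref{relationship1'}, so that all the real work is concentrated on $\hat\eta[\bold{\Omega'}](\bar z)$.

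Non-negativity of $\hat\eta[\bold{\Omega'}](\bar z)$ is immediate from Cauchy--Schwarz: any tuple $(x_1^*,\ldots,x_m^*)$ with $\sum_{i=1}^m\|x_i^*\|^2=1$ satisfies $\|\sum_{i=1}^m x_i^*\|^2\le m\sum_{i=1}^m\|x_i^*\|^2 = m$, so $1-\frac{1}{m}\|\sum_{i=1}^m x_i^*\|^2\in[0,1]$ and the objective in \eqref{eta'} is a well-defined non-negative real number. For the upper bound I would construct an explicit admissible tuple. Under the (implicit) assumption that at least one of the cones $\widehat N_{\Omega_i}(\bar x,\delta)$ is nontrivial for all small $\delta>0$, say $i=1$, pick a unit vector $u\in\widehat N_{\Omega_1}(\bar x,\delta)$ and set $x_1^*=u$ and $x_i^*=0$ for $i\ge 2$. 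Then $\sum_{i=1}^m\|x_i^*\|^2=1$ and $\frac{1}{m}\|\sum_{i=1}^m x_i^*\|^2=\frac{1}{m}$, so the value of the objective equals $\bigl(\tfrac{1}{2}-\tfrac{1}{2}\sqrt{1-1/m}\bigr)^{1/2}$; taking the infimum and then the limit as $\delta\downarrow 0$ preserves this upper bound on $\hat\eta[\bold{\Omega'}](\bar z)$.

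The two remaining chains of estimates follow by algebra. By Theorem~\ref{relationship1'}(i), $(\hat\nu[\bold{\Omega'}](\bar z))^2 = 1-(\hat\eta[\bold{\Omega'}](\bar z))^2$, so the bracket on $\hat\eta$ translates into $\tfrac{1}{2}+\tfrac{1}{2}\sqrt{1-1/m}\le (\hat\nu[\bold{\Omega'}](\bar z))^2\le 1$, and taking square roots yields the claim for $\hat\nu$. Theorem~\ref{relationship1'}(ii), $1+\hat c[\bold{\Omega'}](\bar z)=2(\hat\nu[\bold{\Omega'}](\bar z))^2$, then turns this bracket directly into $\sqrt{1-1/m}\le\hat c[\bold{\Omega'}](\bar z)\le 1$.

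The only real obstacle is the degenerate case in which all cones $\widehat N_{\Omega_i}(\bar x,\delta)$ are trivial for some $\delta>0$, i.e.\ $\bar x$ is an interior point of every $\Omega_i$; the construction in the second paragraph then breaks down and the empty-set conventions force $\hat\eta=1$, $\hat\nu=0$, $\hat c=-1$, which violate the stated brackets. The corollary is therefore to be understood under the implicit hypothesis that this degeneracy does not occur, which is the only case of genuine interest for uniform regularity anyway.
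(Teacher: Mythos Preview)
Your argument is correct and matches the paper's own proof, which is even terser: it simply notes that the estimates follow from Proposition~\ref{rewrite} and the fact that $\min\{\|x_1+\ldots+x_m\|\mid \sum\|x_i\|^2=1\}\le 1$, i.e., exactly your ``one unit normal, the rest zero'' construction, with the bounds on $\hat\nu$ and $\hat c$ read off from \eqref{nu'} and \eqref{c'} rather than via Theorem~\ref{relationship1'} (an immaterial difference). Your observation about the degenerate case where every $\widehat N_{\Omega_i}(\bar x,\delta)$ is trivial is valid and is a point the paper does not address; the stated brackets indeed fail there under the empty-set conventions, so the corollary tacitly assumes at least one nontrivial cone.
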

\begin{proof}
The estimates follow from Proposition \ref{rewrite} due to the fact that

\qquad $\min\{\|x_1+x_2+\ldots+x_m\|\mid \|x_1\|^2+\|x_2\|^2+\ldots+\|x_m\|^2=1\}\le 1$.
\qed\end{proof}

Dual space constants \eqref{eta'}, \eqref{nu'}, and \eqref{c'} can be used to characterize the uniform regularity of collections of $m$ sets.


The next corollary follows from Proposition \ref{mTO2} and Corollary \ref{C}.

\begin{corollary}\label{C2} $\bold{\Omega}$ is uniformly regular at $\bar x\in \cap_{i=1}^m\Omega_i$ if and only if one of the following equivalent conditions holds true:
\begin{enumerate}
\item
$\hat\eta[\bold{\Omega'}](\bar z)>0$;
\item
$\hat\nu[\bold{\Omega'}](\bar z) < 1$;
\item
$\hat c[\bold{\Omega'}](\bar z) < 1$.
\end{enumerate}
\end{corollary}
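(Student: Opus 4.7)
The plan is simply to chain the two results cited in the hint. By Proposition~\ref{mTO2}, uniform regularity of $\bold{\Omega}$ at $\bar x$ is equivalent to uniform regularity of the two-set collection $\bold{\Omega'}=\{\Omega,L\}$ at the diagonal point $\bar z=A\bar x$, where $\Omega$ and $L$ are defined in \eqref{product}. So the first step is to observe that the hypothesis $\bar x\in\cap_{i=1}^m\Omega_i$ ensures $\bar z\in\Omega\cap L$ (each coordinate of $\bar z$ lies in the corresponding $\Omega_i$, and all coordinates of $\bar z$ are equal), which is the legitimate starting point required by Corollary~\ref{C}.

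Next I would verify that the ambient product space $X^m$, endowed with the norm
\[
\|(x_1,\ldots,x_m)\| = \left(\sum_{i=1}^m\|x_i\|^2\right)^{1/2},
\]
is itself a Hilbert space, since $X$ is. This is the norm referenced by the constants $\hat\eta[\bold{\Omega'}](\bar z)$, $\hat\nu[\bold{\Omega'}](\bar z)$, and $\hat c[\bold{\Omega'}](\bar z)$ as set up in Proposition~\ref{rewrite}, so Corollary~\ref{C} applies verbatim to $\bold{\Omega'}$ at $\bar z$.

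Applying Corollary~\ref{C} to $\bold{\Omega'}$ at $\bar z$ then yields that $\bold{\Omega'}$ is uniformly regular at $\bar z$ if and only if any (equivalently, all) of $\hat\eta[\bold{\Omega'}](\bar z)>0$, $\hat\nu[\bold{\Omega'}](\bar z)<1$, or $\hat c[\bold{\Omega'}](\bar z)<1$ holds. Composing this equivalence with the one provided by Proposition~\ref{mTO2} gives precisely the stated corollary.

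There is no real obstacle in this proof — it is a direct composition of two previously established equivalences. The only point worth a line of verification is that the setup of Corollary~\ref{C} (two sets in a Hilbert space, with the base point in their intersection) is indeed met by $(\bold{\Omega'},\bar z)$ in $X^m$; once this is noted, the three conditions transfer automatically.
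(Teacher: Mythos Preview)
Your proposal is correct and matches the paper's own argument exactly: the paper simply states that Corollary~\ref{C2} follows from Proposition~\ref{mTO2} and Corollary~\ref{C}, which is precisely the composition you spell out. Your additional remarks (that $\bar z\in\Omega\cap L$ and that $X^m$ with the $\ell^2$-type norm is Hilbert so that Corollary~\ref{C} applies) are the natural sanity checks underlying that one-line justification.
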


Observe that, when $m=2$, constants \eqref{eta'}, \eqref{nu'}, and \eqref{c'} do not coincide with the corresponding constants \eqref{n=2e}, \eqref{constant2}, and \eqref{ba} .

\begin{corollary}\label{compare}
When $m=2$, the following relations hold true:
\begin{align}\notag
\hat{\eta}[\bold{\Omega'}](\bar{z}) &=\lim_{\delta \downarrow 0}\inf \Biggl\{\left(\frac{1-\|x_1^*-x_2^*\|}{2}\right)^{\frac{1}{2}} \mid&& x_i^* \in \widehat N_{\Omega_i}(\bar x,\delta)\; (i=1,2),
\\\notag
&&&\|x_1^*\|^2+\|x_2^*\|^2=\frac{1}{2}\Biggr\},
\\\notag
\hat{\nu}[\bold{\Omega'}](\bar{z}) &=\lim_{\delta \downarrow 0}\sup \Biggl\{\left(\frac{1+\|x_1^*-x_2^*\|}{2}\right)^{\frac{1}{2}} \mid&& x_i^* \in \widehat N_{\Omega_i}(\bar x,\delta)\; (i=1,2),
\\\notag
&&&\|x_1^*\|^2+\|x_2^*\|^2=\frac{1}{2}\Biggr{\}},
\\\notag
\hat{c}[\bold{\Omega'}](\bar{z})&= \lim_{\delta \downarrow 0}\sup \biggl{\{}\|x_1^*-x_2^*\|\quad\mid&& x_i^* \in \widehat N_{\Omega_i}(\bar x,\delta)\; (i=1,2),
\\\label{cons}
&&&\|x_1^*\|^2+\|x_2^*\|^2=\frac{1}{2}\biggr{\}}.
\end{align}
\end{corollary}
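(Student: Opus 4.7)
The plan is to specialize Proposition~\ref{rewrite} to $m=2$ and then convert the resulting expressions by combining (a) the parallelogram identity in Hilbert space and (b) a trivial rescaling of the constraint $\sum\|x_i^*\|^2$.

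First I would start from formulas \eqref{eta'}, \eqref{nu'}, \eqref{c'} with $m=2$, so the constraint reads $\|x_1^*\|^2+\|x_2^*\|^2=1$ and the quantity appearing inside the outer square root is $1-\tfrac12\|x_1^*+x_2^*\|^2$. The key observation is the parallelogram identity
$$
\|x_1^*+x_2^*\|^2+\|x_1^*-x_2^*\|^2 = 2(\|x_1^*\|^2+\|x_2^*\|^2)=2,
$$
which on the constraint set gives $1-\tfrac12\|x_1^*+x_2^*\|^2=\tfrac12\|x_1^*-x_2^*\|^2$, hence $\left(1-\tfrac12\|x_1^*+x_2^*\|^2\right)^{1/2}=\tfrac{1}{\sqrt2}\|x_1^*-x_2^*\|$. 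Substituting this into \eqref{eta'}, \eqref{nu'}, \eqref{c'} turns the three formulas into expressions of $\|x_1^*-x_2^*\|/\sqrt2$ taken over $x_i^*\in\widehat N_{\Omega_i}(\bar x,\delta)$ with $\|x_1^*\|^2+\|x_2^*\|^2=1$.

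Next I would perform the change of variables $y_i^*:=x_i^*/\sqrt2$. Since each $\widehat N_{\Omega_i}(\bar x,\delta)$ is a cone, this is a bijection of the feasible set $\{\|x_1^*\|^2+\|x_2^*\|^2=1\}$ onto $\{\|y_1^*\|^2+\|y_2^*\|^2=\tfrac12\}$, while $\tfrac{1}{\sqrt2}\|x_1^*-x_2^*\|=\|y_1^*-y_2^*\|$. Consequently, the $\hat{c}$ expression becomes the supremum of $\|y_1^*-y_2^*\|$ over the new constraint set, matching \eqref{cons}; and the $\hat{\eta},\hat{\nu}$ expressions become exactly $\bigl((1\mp\|y_1^*-y_2^*\|)/2\bigr)^{1/2}$ as claimed.

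The computation is essentially routine; the only point requiring any care is that the rescaling used in the last step really preserves the infimum/supremum, which is the reason I need to invoke the conic character of the Fr\'echet normal sets and to check that the constraint set $\{\sum\|x_i^*\|^2=1\}$ is mapped bijectively onto $\{\sum\|x_i^*\|^2=\tfrac12\}$. No further obstacles are expected, since the three identities of the corollary all follow from the single parallelogram identity combined with this rescaling, so once the formula for $\hat{\eta}[\bold{\Omega'}](\bar z)$ is established the other two follow identically (or, alternatively, by appealing to Theorem~\ref{relationship1'}).
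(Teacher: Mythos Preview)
Your proposal is correct and follows essentially the same route as the paper: specialize Proposition~\ref{rewrite} to $m=2$, rewrite $1-\tfrac12\|x_1^*+x_2^*\|^2$ as $\tfrac12\|x_1^*-x_2^*\|^2$ on the constraint set (the paper does this by direct expansion rather than citing the parallelogram identity, but it is the same computation), and then rescale by $1/\sqrt{2}$ using the conic structure of the normal cones. The paper carries out the computation for $\hat c$ and then remarks that the other two relations follow, just as you do.
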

\begin{proof}
From Proposition \ref{rewrite}, we have
\begin{align*}
\hat{c}[\bold{\Omega'}](\bar{z})= \lim_{\delta \downarrow 0}\sup \Biggl{\{}\left(1-\frac{1}{2} \norm{x_1^*+x_2^*}^2\right)^{1/2}\mid\ & x_i^*\in\widehat N_{\Omega_i}(\bar x,\delta)\; (i=1,2),
\\
&\|x_1^*\|^2+\|x_2^*\|^2=1\Biggr{\}}.
\end{align*}
In the above formula,
\begin{align*}
1-\frac{1}{2}\norm{x_1^*+x_2^*}^2 &=\frac{1}{2}(2-\norm{x_1^*+x_2^*}^2)
\\
&=\frac{1}{2}\left(2(\|x_1^*\|^2+\|x_2^*\|^2) -(\|x_1^*\|^2+\|x_2^*\|^2+2\langle x_1^*,x_2^*\rangle)\right)
\\
&=\frac{1}{2}(\|x_1^*\|^2+\|x_2^*\|^2 -2\langle x_1^*,x_2^*\rangle) \\
&=\frac{1}{2}\norm{x_1^*-x_2^*}^2 =\left\|\frac{x_1^*}{\sqrt{2}} -\frac{x_2^*}{\sqrt{2}}\right\|^2
\end{align*}
and
\begin{align*}
\left\|\frac{x_1^*}{\sqrt{2}}\right\|^2 +\left\|\frac{x_2^*}{\sqrt{2}}\right\|^2 &=\frac{1}{2}.
\end{align*}
This proves \eqref{cons}, which also implies the other relations.
\qed\end{proof}

The next relation between $\hat{c}[\bold{\Omega'}](\bar{z})$ and $\hat{\nu}[\bold{\Omega}](\bar{x})$ can be of interest.

\begin{proposition}
When $m=2$, it holds:
\begin{equation}\label{relation}
\hat{c}[\bold{\Omega'}](\bar{z})\ge \hat{\nu}[\bold{\Omega}](\bar{x}).
\end{equation}
Furthermore, \eqref{relation} holds as an equality whenever $\hat{\nu}[\bold{\Omega}](\bar{x}) > 1/\sqrt{2}$.
\end{proposition}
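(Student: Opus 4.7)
The plan is to exploit the representations obtained in Corollary~\ref{compare}, which express both constants as suprema (as $\delta\downarrow 0$) of the same quantity $\|x_1^*-x_2^*\|$ over pairs $x_i^*\in\widehat N_{\Omega_i}(\bar x,\delta)$, but with different norm constraints: $\hat{\nu}[\bold{\Omega}](\bar x)$ corresponds to $\|x_1^*\|=\|x_2^*\|=\tfrac{1}{2}$, while $\hat{c}[\bold{\Omega'}](\bar z)$ corresponds to the looser $\|x_1^*\|^2+\|x_2^*\|^2=\tfrac{1}{2}$. Since the first constraint implies the second, every admissible pair for $\hat{\nu}$ is admissible for $\hat{c}[\bold{\Omega'}]$, which yields \eqref{relation} at once.

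For the reverse inequality under the hypothesis $\hat{\nu}[\bold{\Omega}](\bar x)>1/\sqrt{2}$, I would fix $\delta>0$ and control an arbitrary pair admissible for $\hat{c}[\bold{\Omega'}](\bar z)$. Write $a=\|x_1^*\|$, $b=\|x_2^*\|$, so $a^2+b^2=\tfrac12$. The degenerate case $ab=0$ is immediate: then $\|x_1^*-x_2^*\|=\max\{a,b\}=1/\sqrt{2}$. When $ab>0$, the cone property of $\widehat N_{\Omega_i}(\bar x,\delta)$ allows rescaling to $u:=x_1^*/(2a)$, $v:=x_2^*/(2b)$, a pair admissible for $\hat{\nu}$. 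Expanding $\langle u,v\rangle$ from $\|u-v\|^2=\tfrac12-2\langle u,v\rangle$ gives $\langle x_1^*,x_2^*\rangle=4ab\langle u,v\rangle=ab(1-2\|u-v\|^2)$, which leads to the key identity
$$
\|x_1^*-x_2^*\|^2 \;=\; \tfrac{1}{2} + 2ab\bigl(2\|u-v\|^2-1\bigr).
$$

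Now apply AM-GM, $ab\le (a^2+b^2)/2=\tfrac14$, with equality iff $a=b=\tfrac12$. If $\|u-v\|^2\ge\tfrac12$, the coefficient $2\|u-v\|^2-1$ is nonnegative, so the right-hand side is maximized by $ab=\tfrac14$, yielding $\|x_1^*-x_2^*\|^2\le\|u-v\|^2$. If $\|u-v\|^2<\tfrac12$, the coefficient is negative and the right-hand side is strictly bounded by $\tfrac12$. Combining both regimes with the degenerate case, one obtains the $\delta$-level estimate
$$
\sup\bigl\{\|x_1^*-x_2^*\|\bigr\} \;\le\; \max\Bigl(\sup\bigl\{\|u-v\|\bigr\},\;1/\sqrt{2}\Bigr),
$$
where the suprema are taken over the admissible sets at scale $\delta$. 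Passing to the limit $\delta\downarrow 0$ yields $\hat{c}[\bold{\Omega'}](\bar z)\le\max\bigl(\hat{\nu}[\bold{\Omega}](\bar x),\,1/\sqrt{2}\bigr)$, and the hypothesis $\hat{\nu}[\bold{\Omega}](\bar x)>1/\sqrt{2}$ collapses this to the desired equality.

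The main obstacle is recognizing the correct parametrization: the identity above cleanly separates the sup into the regime where the bound matches $\hat{\nu}$ (attained exactly when $a=b$, i.e.\ on the $\hat{\nu}$-admissible slice) and the regime where it is uniformly below $1/\sqrt{2}$ and therefore irrelevant in the large-$\hat{\nu}$ case. Once the identity is in place, the AM-GM case split and the passage to the limit are routine.
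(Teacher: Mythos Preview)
Your proof is correct and follows essentially the same route as the paper's own argument. Both proofs obtain \eqref{relation} from the inclusion of the feasible set $\{\|x_1^*\|=\|x_2^*\|=\tfrac12\}$ into $\{\|x_1^*\|^2+\|x_2^*\|^2=\tfrac12\}$, and both prove the reverse inequality by rescaling an arbitrary admissible pair $(x_1^*,x_2^*)$ to one with $\|x_i^*\|=\tfrac12$ and showing that this can only increase $\|x_1^*-x_2^*\|$ in the relevant regime; the paper phrases the regime condition as $\langle x_1^*,x_2^*\rangle<0$, which is equivalent to your $\|u-v\|^2>\tfrac12$. Your version is slightly more explicit (handling the degenerate case $ab=0$ separately) and yields the unconditional intermediate estimate $\hat c[\bold{\Omega'}](\bar z)\le\max\bigl(\hat\nu[\bold{\Omega}](\bar x),\,1/\sqrt{2}\bigr)$, which is a small bonus over the paper's presentation.
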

\begin{proof}
In view of \eqref{cons} and \eqref{constant2}, inequality \eqref{relation} is always true.

We prove the second assertion.
Suppose
$\hat{\nu}[\bold{\Omega}](\bar{x}) > 1/\sqrt{2}$.
By \eqref{constant2}, for any $\delta>0$, one can find $x_i^* \in \widehat N_{\Omega_i}(\bar x,\delta)$ with $\norm{x_i^*}=\frac{1}{2}\;(i=1,2)$ such that
$\|x_1^* - x_2^*\|>1/\sqrt{2}$.

Observe that, for any $x_1^*$ and $x_2^*$ with $\norm{x_1^*}^2+\norm{x_2^*}^2=\frac{1}{2}$, it holds
$$
\norm{x_1^*-x_2^*}^2=\frac{1}{2}-2\langle x_1^*,x_2^*\rangle.
$$
Hence, maximizing $\norm{x_1^*-x_2^*}$ is equivalent to minimizing $\langle x_1^*,x_2^*\rangle$, and condition $\|x_1^* - x_2^*\|>1/\sqrt{2}$ is equivalent to $\langle x_1^*,x_2^*\rangle<0$.
Under the assumptions made,
\begin{align*}
&\sup\Big\{\norm{x_1^*-x_2^*}\mid x_i^*\in \widehat N_{\Omega_i}(\bar x,\delta)\, (i=1,2),\,
\norm{x_1^*}^2+\norm{x_2^*}^2=\frac{1}{2}\Big\}
\\
=&\sup\Big\{\norm{x_1^*-x_2^*}\mid x_i^*\in \widehat N_{\Omega_i}(\bar x,\delta)\, (i=1,2),\,
\norm{x_1^*}^2+\norm{x_2^*}^2=\frac{1}{2},\,\langle x_1^*,x_2^*\rangle<0\Big\}
\\
=&\sup\Big\{\norm{x_1^*-x_2^*}\mid x_i^*\in \widehat N_{\Omega_i}(\bar x,\delta),\,\norm{x_i^*}=\frac{1}{2}\, (i=1,2),\,
\langle x_1^*,x_2^*\rangle<0\Big\},
\end{align*}
and it follows from \eqref{cons} that
$\hat{c}[\bold{\Omega'}](\bar{z})= \hat{\nu}[\bold{\Omega}](\bar{x})$.
\qed\end{proof}

\section{Applications in projection algorithms}\label{S3}

Inspired by \cite{LewLukMal09}, we are making an attempt to extend convergence results of the alternating projections for solving feasibility problems to those of the cyclic projection algorithms in Hilbert spaces.
Recall that a feasibility problem consists in finding common points of a collection of sets with nonempty intersection.
This model incorporates many important optimization problems.

We first recall some basic facts about projections.
Given a nonempty set $\Omega$ in a normed linear space $X$, the distance function and projection mapping are defined, for $x\in X$, respectively, as follows:
$$
d(x,\Omega):= \inf_{\omega\in \Omega}\norm{x-\omega},
$$
$$
P_{\Omega}(x):= \left\{\omega\in\Omega \mid \|x-\omega\|=d(x,\Omega)\right\}.
$$

\begin{lemma}[\cite{ClaLedSteWol98}]\label{lem1m}
$\omega\in P_{\Omega}(x) \Longrightarrow x-\omega \in {N}_{\Omega}(\omega)$.
\end{lemma}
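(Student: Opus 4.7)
The plan is to unwind the definition of the Fréchet normal cone at $\omega$ and show that the vector $x-\omega$ (identified with an element of $X^*$ via the Hilbert space inner product, since the application is in Section~\ref{S3} set in a Hilbert space) satisfies the defining limsup inequality.

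First I would pick an arbitrary $u\in\Omega$ and exploit the defining minimality property of the projection: $\|x-\omega\|\le\|x-u\|$. Squaring and expanding the \RHS\ as $\|(x-\omega)+(\omega-u)\|^{2}=\|x-\omega\|^{2}+2\langle x-\omega,\omega-u\rangle+\|u-\omega\|^{2}$ and cancelling $\|x-\omega\|^{2}$ yields the pointwise inequality
\begin{equation*}
\langle x-\omega,u-\omega\rangle\le\tfrac{1}{2}\|u-\omega\|^{2}.
\end{equation*}

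Next, for $u\in\Omega$ with $u\ne\omega$, I divide both sides by $\|u-\omega\|$ to obtain
\begin{equation*}
\frac{\langle x-\omega,u-\omega\rangle}{\|u-\omega\|}\le\tfrac{1}{2}\|u-\omega\|.
\end{equation*}
Letting $u\TO{\Omega}\omega$, the \RHS\ tends to $0$, so
\begin{equation*}
\limsup_{u\TO{\Omega}\omega}\frac{\langle x-\omega,u-\omega\rangle}{\|u-\omega\|}\le 0,
\end{equation*}
which is exactly the condition defining $x-\omega\in N_{\Omega}(\omega)$ recalled in Section~\ref{S20}. If $\omega$ is isolated in $\Omega$, the limsup is taken over an empty net and the condition is vacuously satisfied.

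There is essentially no obstacle here: the argument is a routine verification and the only mildly subtle point is the implicit identification of $X$ with $X^{*}$ through the Hilbert space inner product, which is standard and already used throughout Section~\ref{S2}.
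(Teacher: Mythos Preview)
Your argument is correct and is the standard one. Note, however, that the paper does not actually prove Lemma~\ref{lem1m}: it is stated with a citation to \cite{ClaLedSteWol98} and no proof is given. So there is no ``paper's own proof'' to compare against; your verification simply supplies what the paper takes for granted from the literature. Your remark about the implicit Hilbert-space identification of $X$ with $X^{*}$ is well placed, since the lemma as written only makes sense once $x-\omega\in X$ can be read as an element of $X^{*}$.
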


From now on, we are considering a finite collection of closed sets $\bold{\Omega}=\{\Omega_1,\Omega_2,\ldots,\Omega_m\}$ ($m>1$) and assuming the existence of a point $\bar{x}\in \cap_{i=1}^m\Omega_i$.

\begin{definition}\label{en}
A sequence $(x_k)$ is generated by
\begin{enumerate}
\item
the \emph{averaged projections} for $\bold{\Omega}$ if
    \begin{equation}\label{averagedP}
    x_{k+1} \in \dfrac{1}{m} \sum_{i=1}^m P_{\Omega_i}(x_k),\quad k=0,1,\ldots;
    \end{equation}
\item
the \emph{cyclic projections} for $\bold{\Omega}$ if
    \begin{equation}\label{cyclicP}
    x_{k+1} \in P_{\Omega_{k+1}}(x_k),\quad k=0,1,\ldots,
    \end{equation}
with the convention $\Omega_{i+nm}=\Omega_i$ for all $i=1,\ldots,m$ and $n\in\N$.
\end{enumerate}
\end{definition}

Note that the existence of the sequences in Definition~\ref{en} cannot be guaranteed in general, unless the space is finite dimensional.

From now on, we are assuming that $X$ is a Hilbert space.
The next regularity property is needed in our analysis.

\begin{definition}[\cite{LewLukMal09}, Definition 4.3] \label{d3}
A closed set $\Omega$ is super-regular at $\bar x\in \Omega$ if, for any $\gamma>0$, any two points $x,z$ sufficiently close to $\bar x$ with $z\in \Omega$, and any point $y\in P_{\Omega}(x)$, it holds $\langle z-y,x-y\rangle\le \gamma\|z-y\|\cdot\|x-y\|$.
\end{definition}
\begin{lemma}[\cite{LewLukMal09}, Proposition 4.4]\label{l2}
A closed set $\Omega$ is super-regular at $\bar x\in \Omega$ if and only if for any $\gamma>0$, there is $\delta>0$ such that
$$
\langle u,z-x\rangle\le \gamma\|u\|\cdot\|z-x\|,\quad \forall z,x\in \Omega\cap B_{\delta}(\bar x), u\in N_{\Omega}(x).
$$
\end{lemma}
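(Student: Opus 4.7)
The statement is an ``if and only if'' characterization, so the plan is to prove two implications. The direction from the dual (normal-cone) inequality to super-regularity is a one-line consequence of Lemma~\ref{lem1m}, while the converse is the technical heart of the lemma, requiring a ``Fréchet-to-proximal'' approximation argument. For the easy direction, given $\gamma>0$, take $\delta$ from the normal-cone hypothesis; for $x,z\in\Omega\cap B_{\delta/2}(\bar x)$ and any $y\in P_\Omega(x)$, the inclusion $\bar x\in\Omega$ forces $\|y-x\|\le\|\bar x-x\|$, hence $y\in B_\delta(\bar x)$. Lemma~\ref{lem1m} yields $x-y\in N_\Omega(y)$, and plugging $u:=x-y$ into the hypothesis at the pair $(y,z)$ is exactly the super-regularity inequality $\langle z-y,x-y\rangle\le\gamma\|x-y\|\|z-y\|$.

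For the converse, fix $\gamma>0$ and pick auxiliary constants $\gamma'\in(0,\gamma)$ and $\varepsilon>0$ with $\gamma'+2\varepsilon<\gamma$. Let $\delta_1$ come from super-regularity with parameter $\gamma'$, set $\delta:=\delta_1/2$, and take $x,z\in\Omega\cap B_\delta(\bar x)$ and $u\in N_\Omega(x)$ which by positive homogeneity we may assume to be of unit norm. The definition of Fréchet normal supplies $\eta>0$ with $\langle u,v-x\rangle\le\varepsilon\|v-x\|$ for $v\in\Omega\cap B_\eta(x)$. The key step is an external perturbation: for small $t>0$ set $w_t:=x+tu$ and let $y_t\in P_\Omega(w_t)$ (an approximate projection suffices if attainment fails in the Hilbert setting). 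Since $x\in\Omega$, $\|w_t-y_t\|\le t$; expanding $\|w_t-y_t\|^2=t^2+2t\langle u,x-y_t\rangle+\|x-y_t\|^2$ and invoking the Fréchet bound, valid once $\|y_t-x\|\le\eta$ (which holds for $t\le\eta/2$ from the crude estimate $\|y_t-x\|\le2t$), sharpens this to $\|y_t-x\|\le2t\varepsilon$. Super-regularity then applies at $(w_t,z,y_t)$, yielding $\langle z-y_t,w_t-y_t\rangle\le\gamma't\|z-y_t\|$. Substituting $w_t-y_t=tu+(x-y_t)$ and absorbing the cross term $\langle z-y_t,x-y_t\rangle$ by Cauchy--Schwarz gives $\langle u,z-y_t\rangle\le(\gamma'+2\varepsilon)\|z-y_t\|$. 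Finally, decomposing $\langle u,z-x\rangle=\langle u,z-y_t\rangle+\langle u,y_t-x\rangle$, bounding the second term via the Fréchet estimate by $2t\varepsilon^2$, and letting $t\downarrow0$ produces $\langle u,z-x\rangle\le(\gamma'+2\varepsilon)\|z-x\|\le\gamma\|z-x\|$.

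The main obstacle is this converse direction. Fréchet normality delivers only a pointwise estimate on a neighborhood whose radius $\eta$ depends on $(u,x,\varepsilon)$, whereas the target inequality must be uniform in $x,z$ over a single ball $B_\delta(\bar x)$. The external probe $w_t=x+tu$ is the device that bridges this gap: its projection $y_t$ is forced to approach $x$ at rate $O(t\varepsilon)$, so the non-uniform Fréchet bound is invoked only on a shrinking neighborhood, while the uniform control comes from applying super-regularity at the external point $w_t$, which sits at a position and scale we choose. A secondary issue specific to the Hilbert (rather than Euclidean) setting of \cite{LewLukMal09} is the possible non-attainment of $P_\Omega(w_t)$; this is harmlessly circumvented by working with an $o(t)$-accurate approximate projection throughout, which does not affect any of the estimates above to leading order in $t$.
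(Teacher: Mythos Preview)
The paper does not prove this lemma at all; it is quoted verbatim from \cite[Proposition~4.4]{LewLukMal09} with no accompanying argument, so there is nothing to compare your proof against in the present paper.

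Your argument is essentially correct, but the easy direction contains a slip that you should fix: you write ``for $x,z\in\Omega\cap B_{\delta/2}(\bar x)$ and any $y\in P_\Omega(x)$'', but in Definition~\ref{d3} the point $x$ is \emph{not} required to lie in $\Omega$ (if it were, $P_\Omega(x)=\{x\}$ and the inequality is vacuous). You mean $x\in B_{\delta/2}(\bar x)$ and $z\in\Omega\cap B_{\delta/2}(\bar x)$; with that correction your estimate $\|y-\bar x\|\le\|y-x\|+\|x-\bar x\|\le 2\|x-\bar x\|\le\delta$ goes through and the rest of that direction is fine.

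The converse direction is carefully and correctly argued. The perturbation $w_t=x+tu$ together with the two-stage estimate $\|y_t-x\|\le 2t$ (crude) and then $\|y_t-x\|\le 2t\varepsilon$ (refined via the Fr\'echet bound) is exactly the mechanism used in \cite{LewLukMal09}. Your remark about approximate projections in the infinite-dimensional Hilbert setting is apt: since the paper works in a general Hilbert space while \cite{LewLukMal09} is Euclidean, attainment of $P_\Omega(w_t)$ is not guaranteed, and replacing $y_t$ by a point in $\Omega$ with $\|w_t-y_t\|\le d(w_t,\Omega)+o(t)$ preserves all the $O(t)$ estimates and the final limit $t\downarrow0$.
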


\begin{remark} 
Similar to the well known \emph{prox-regularity} property (the projection mapping associated with the set being single-valued around the reference point; cf. \cite{ClaSteWol95,BerThi04,Sha94,PolRocThi00}), the super-regu\-larity one in Definition~\ref{d3} is a way of describing sets being locally ``almost'' convex.
It is weaker than the prox-regularity while stronger than the Clarke regularity and fits well the convergence analysis of projections algorithms.
For a detailed discussion and characterizations of this property we refer the reader to \cite{LewLukMal09}.
\end{remark}

\begin{theorem}\label{newconvergence}
Suppose $\bold{\Omega}$ is uniformly regular at $\bar x$ with
\begin{equation}\label{condition}
\hat c[\bold{\Omega}](\bar x)<\frac{1}{m-1}
\end{equation}
and $\Omega_1$ is super-regular at $\bar x$.
Then, for any $c\in((m-1)\hat c[\bold{\Omega}](\bar x),1)$,
a sequence $(x_k)$ generated by cyclic projections for $\bold{\Omega}$ linearly converges to some point in $\cap_{i=1}^m\Omega_i$ with rate $\sqrt[m]{c}$, provided that for each $k=0,1,\ldots,$
\begin{equation}\label{nonexpansive}
\|x_{km+i+1}-x_{km+i}\|\leq \|x_{km+2}-x_{km+1}\|\quad (i=2,\ldots,m)
\end{equation}
and $x_0$ is sufficiently close to $\bar x$.
\end{theorem}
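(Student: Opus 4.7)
The plan is to adapt the Lewis--Luke--Malick contraction argument for two-set alternating projections to the cyclic $m$-set setting, combining the normal-vector inequality of Lemma \ref{l1} (the uniform-regularity ingredient) with the super-regularity of $\Omega_1$ (Lemma \ref{l2}). Writing $u_j := x_{j-1}-x_j$ and $s_j := \|u_j\|$, Lemma \ref{lem1m} gives $u_j \in N_{\Omega_{((j-1)\bmod m)+1}}(x_j)$; in particular $u_{(k+1)m+1} \in N_{\Omega_1}(x_{(k+1)m+1})$ and $u_{km+i} \in N_{\Omega_i}(x_{km+i})$ for $i = 2,\ldots,m$. Given $c \in ((m-1)\hat c[\bold{\Omega}](\bar x), 1)$, I first pick $c' \in (\hat c[\bold{\Omega}](\bar x),\,c/(m-1))$ and $\gamma>0$ small enough that $(m-1)c' + m\gamma < c$, then choose $\delta>0$ so that the conclusions of Lemmas \ref{l1} and \ref{l2} hold on $B_\delta(\bar x)$; choosing $x_0$ sufficiently close to $\bar x$ keeps all iterates in $B_\delta(\bar x)$ by induction using the step bounds below.

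The crucial step is the cycle-bridging inequality $s_{(k+1)m+1} < c\,s_{km+2}$. Starting from the identity
$$ s_{(k+1)m+1}^2 = \langle u_{(k+1)m+1},\,x_{(k+1)m}-x_{km+1}\rangle + \langle u_{(k+1)m+1},\,x_{km+1}-x_{(k+1)m+1}\rangle, $$
I bound two terms separately. The first telescopes as $-\sum_{i=2}^{m}\langle u_{(k+1)m+1},u_{km+i}\rangle$; since $u_{(k+1)m+1}$ is a normal to $\Omega_1$ while each $u_{km+i}$ ($i\ge 2$) is a normal to the distinct set $\Omega_i$, Lemma \ref{l1} yields $-\langle u_{(k+1)m+1},u_{km+i}\rangle < c'\,s_{(k+1)m+1}\,s_{km+i}$, and the hypothesis $s_{km+i}\le s_{km+2}$ for $i=3,\ldots,m$ delivers the bound $c'(m-1)\,s_{(k+1)m+1}\,s_{km+2}$. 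The second term is controlled by super-regularity at $x_{(k+1)m+1}\in\Omega_1$ with test point $x_{km+1}\in\Omega_1$; the triangle inequality together with the hypothesis (including $s_{(k+1)m+1}\le s_{km+2}$ from $i=m$) gives $\|x_{km+1}-x_{(k+1)m+1}\|\le m\,s_{km+2}$, so this term is at most $\gamma m\,s_{(k+1)m+1}\,s_{km+2}$. Dividing through by $s_{(k+1)m+1}$ yields $s_{(k+1)m+1} \le ((m-1)c' + m\gamma)\,s_{km+2} < c\,s_{km+2}$.

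To conclude linear convergence, I would iterate this estimate across cycles: the hypothesis already bounds every $s_{km+i}$ ($i=3,\ldots,m$) by $s_{km+2}$, the key estimate refines the bridging step $s_{(k+1)m+1}$ to factor $c$, and combining these telescopically one deduces that $\sum_{j>N}s_j$ is dominated by a geometric series with ratio $c$ per cycle, i.e.\ ratio $\sqrt[m]{c}$ per step. The sequence $(x_k)$ is then Cauchy with limit $x^* \in \bigcap_{i=1}^{m}\Omega_i$ (each $\Omega_i$ is closed and $x_{km+i}\in\Omega_i$ along the subsequence), and the bound $\|x_k-x^*\|\le\sum_{j>k}s_j$ yields the claimed rate. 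The main obstacle is precisely this last telescoping: the cycle-bridging estimate relates $s_{(k+1)m+1}$ only to the preceding cycle's $s_{km+2}$, so one must combine it with the monotonicity hypothesis to show that $s_{km+2}$ itself decays geometrically across cycles, and the only handle on $s_{km+2}$ via a previous $\Omega_2$-iterate naively produces a multiplicative overhead of $(m-1)$. Avoiding this blowup is where the assumption $(m-1)\hat c[\bold{\Omega}](\bar x)<1$ is used at full strength and where the locality (``$x_0$ sufficiently close to $\bar x$'') is essential.
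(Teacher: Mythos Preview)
Your approach is essentially the paper's own proof: the same parameter choices $(c',\gamma,\delta)$ with $(m-1)c'+m\gamma<c$, the same decomposition of $\|x_{(k+1)m+1}-x_{(k+1)m}\|^{2}$ into a telescoping part bounded via Lemma~\ref{l1} and a remainder bounded via the super-regularity Lemma~\ref{l2}, yielding the identical cycle contraction $\|x_{(k+1)m+1}-x_{(k+1)m}\|\le c\,\|x_{km+2}-x_{km+1}\|$, followed by the same Cauchy-sequence argument and the closedness of the $\Omega_i$ to identify the limit. The ``obstacle'' you flag in your last paragraph is exactly the place where the paper's argument is most delicate as well; there it is handled by running an explicit induction on $k$ for the bound $\|x_{km+i}-\bar x\|\le(m+1)\alpha\,\dfrac{1-c^{k+1}}{1-c}$ (which keeps all iterates inside $B_\delta(\bar x)$) and then iterating the cycle contraction to $\|x_{mj+1}-x_{mj}\|\le c^{\,j}\|x_2-x_1\|$, after which the tail $\sum_{j\ge r}\|x_{j+1}-x_j\|$ is bounded cycle-by-cycle via \eqref{nonexpansive} by a geometric series with ratio $c$ per cycle.
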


\begin{proof}
Let $c\in((m-1)\hat c[\bold{\Omega}](\bar x),1)$.
Choose $c'>\hat c[\bold{\Omega}](\bar x)$ and $\gamma>0$ such that $(m-1)c'+m\gamma<c$ and $\delta> 0$ such that the conclusions of Lemmas \ref{l1} and \ref{l2} (with $\Omega=\Omega_1$) are satisfied.

Let $x_0\in X$ be such that
$$\|x_0-\bar x\| <\frac{\delta(1-c)}{2(m+1)}.$$
Then
\begin{equation}\label{estimate2}
\alpha:=\|x_1-\bar x\|\le\|x_1-x_0\|+\|x_0-\bar x\| \le2\|x_0-\bar x\|<\frac{\delta(1-c)}{m+1}
\end{equation}
and, by (\ref{nonexpansive}),
\begin{align}\notag
\|x_i-\bar x\|&\le \|x_i-x_{i-1}\|+\ldots+\|x_1-\bar x\|
\\\notag
&\le (i-1)\|x_2-x_{1}\|+\|x_1-\bar x\| \le i\|x_1-\bar x\|
\\\label{ans}
&=i\alpha\le(m+1)\alpha\quad (i=2,\ldots,m+1).
\end{align}
We are going to prove by induction that, for all $k=0,1,\ldots$,
\begin{equation}\label{estimate1}
\|x_{km+i}-\bar x\|\le (m+1)\alpha\frac{1-c^{k+1}}{1-c}\quad (i=2,\ldots,m+1).
\end{equation}
When $k=0$, the required inequalities have been established in \eqref{ans}.
Supposing that the inequalities are true for all $k=0,\ldots,l$ where $l\ge0$,
we show that they hold true for $k=l+1$.

We first prove that
\begin{align}\label{mma}
\|x_{(k+1)m+1}-x_{(k+1)m}\| \leq c\|x_{km+2}-x_{km+1}\|\quad (k=0,\ldots,l).
\end{align}
Indeed, if $x_{(k+1)m+1}=x_{(k+1)m}$, the inequality is trivially satisfied.
If $x_{km+2}=x_{km+1}$, then, by condition (\ref{nonexpansive}), $x_{(k+1)m+1}=x_{(k+1)m}$, and the inequality is satisfied too.
Otherwise,
by \eqref{estimate2} and \eqref{estimate1}, $\|x_{km+i}-\bar x\|<\delta$ $(i=2,\ldots,m+1)$ and
we have by Lemmas \ref{l1} and \ref{lem1m}, condition (\ref{nonexpansive}) and definition of projections: \begin{align*}
\langle x_{(k+1)m}&-x_{(k+1)m+1},x_{km+i+1}-x_{km+i}\rangle
\\
&< c' \|x_{(k+1)m}-x_{(k+1)m+1}\|\cdot\|x_{km+i+1}-x_{km+i}\|
\\
&\le c' \|x_{(k+1)m}-x_{(k+1)m+1}\|\cdot\|x_{km+2}-x_{km+1}\|\quad (i=1,\ldots,m-1).
\end{align*}
Adding the above inequalities, we obtain
\begin{multline}\label{lit}
\langle x_{(k+1)m}-x_{(k+1)m+1},x_{(k+1)m}-x_{km+1}\rangle
\\
< (m-1)c'\|x_{(k+1)m}-x_{(k+1)m+1}\|\cdot\|x_{km+2}-x_{km+1}\|.
\end{multline}
At the same time, by Lemma~\ref{l2}, the triangle inequality and condition (\ref{nonexpansive}),
\begin{align*}
\langle x_{(k+1)m}-x_{(k+1)m+1},&x_{km+1}-x_{(k+1)m+1}\rangle
\\
&\le \gamma\|x_{(k+1)m}-x_{(k+1)m+1}\|\cdot \|x_{(k+1)m+1}-x_{km+1}\|,
\\
\|x_{(k+1)m+1}-x_{km+1}\| &\le \sum_{i=1}^{m}\|x_{km+i+1}-x_{km+i}\|
\le m\|x_{km+2}-x_{km+1}\|,
\end{align*}
and consequently,
\begin{multline}\label{qre}
\langle x_{(k+1)m}-x_{(k+1)m+1},x_{km+1}-x_{(k+1)m+1}\rangle
\\
\le m\gamma\|x_{(k+1)m}-x_{(k+1)m+1}\|\cdot \|x_{km+2}-x_{km+1}\|.
\end{multline}
Adding \eqref{lit} and \eqref{qre}, we get
$$
\|x_{(k+1)m}-x_{(k+1)m+1}\|^2 < c\|x_{(k+1)m}-x_{(k+1)m+1}\|\cdot \|x_{km+2}-x_{km+1}\|,
$$
or equivalently
$$
\|x_{(k+1)m+1}-x_{(k+1)m}\|<c\|x_{km+2}-x_{km+1}\|.
$$
This proves \eqref{mma}.

Now with $k=l+1$ and taking into account \eqref{mma}, we have for $i=2,\ldots,m+1$:
\begin{align*}
\|x_{(l+1)m+i}-\bar x\|&\le \|x_{(l+1)m+i}-x_{(l+1)m+i-1}\|+\ldots+\|x_{(l+1)m}-\bar x\|
\\
&\le i\|x_{(l+1)m+1}-x_{(l+1)m}\|+\|x_{(l+1)m}-\bar x\|
\\
&\le ic^{l+1}\|x_{2}-x_{1}\|+\|x_{lm+m}-\bar x\|
\\
&\le (m+1)\alpha c^{l+1}+(m+1)\alpha\frac{1-c^{l+1}}{1-c} =(m+1)\alpha\frac{1-c^{l+2}}{1-c}.
\end{align*}

Finally we prove that $(x_n)$ converges to some point $\tilde x$ in $\cap_{i=1}^m\Omega_i$ with rate $\sqrt[m]{c}$.
Take any $k,r\in\N$ with $k>r$ and choose $n\in\N$ and $i\in \{0,1,\ldots,m-1\}$ such that $r=nm+i$.
We have
\begin{multline}\label{tht}
\|x_k-x_{r}\| \le \sum_{j=r}^{k-1}\|x_{j+1}-x_j\| \le \sum_{j=nm}^{\infty}\|x_{j+1}-x_j\|
\\
\le \sum_{j=n}^{\infty}\sum_{i=0}^{m-1} \|x_{mj+i+1}-x_{mj+i}\|
\le m\sum_{j=n}^{\infty}\|x_{mj+1}-x_{mj}\|
\\
\le m\|x_2-x_1\|\sum_{j=n}^{\infty}c^j
\le \frac{m\alpha c^n}{1-c}.
\end{multline}
Hence, $\|x_k-x_{r}\|\to0$ as $k,r\to\infty$, and consequently
$(x_n)$ is a Cauchy sequence and, therefore, converges to some point $\tilde x\in X$.
It follows from \eqref{tht} that
$$
\|\tilde x-x_r\|\le \frac{m\alpha c^n}{1-c} =\frac{m\alpha}{(1-c)c^\frac{i}{m}}c^\frac{r}{m}
\le \frac{m\alpha}{(1-c)c}(\sqrt[m]{c})^r.
$$
Finally, we check that $\tilde x\in\cap_{i=1}^m\Omega_i$.
Indeed, for any $i\in\{1,2,\ldots,m\}$, $x_{nm+i}\in\Omega_i$.
At the same time, $x_{nm+i}\to\tilde x$ as $n\to\infty$, and consequently, by the closedness of $\Omega_i$, $\tilde x\in\Omega_i$.
\qed\end{proof}

\begin{remark}
When $m=2$, conditions \eqref{nonexpansive} and \eqref{condition} are satisfied automatically.
In the general case of $m$ sets, condition \eqref{nonexpansive} can be ensured by, e.g., the next monotonicity condition:
$$
\|x_{k+2}-x_{k+1}\|\leq \|x_{k+1}-x_k\| \quad (k=1,2,\ldots,).
$$
\end{remark}

The convergence result of the alternating projection method, i.e., the cyclic projection method (\ref{cyclicP}) when $m=2$, established in \cite[Theorem 5.16]{LewLukMal09} is a consequence of Theorem~\ref{newconvergence}.

\begin{corollary}\label{MR} Suppose that $\bold{\Omega}$ is uniformly regular at $\bar x\in \Omega_1\cap \Omega_2$ and $\Omega_1$ is super-regular at this point. Then, any sequence generated by the alternating projections for $\bold{\Omega}$ linearly converges to some point in the intersection provided that $x_0$ is sufficiently close to $\bar x$.
\end{corollary}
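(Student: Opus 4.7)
The plan is to establish a per-cycle linear contraction of consecutive differences, then deduce convergence from a standard geometric series estimate. Fix $c\in((m-1)\hat c[\bold{\Omega}](\bar x),1)$ and choose $c'>\hat c[\bold{\Omega}](\bar x)$ together with $\gamma>0$ so small that $(m-1)c'+m\gamma<c$. With this $c'$, Lemma~\ref{l1} yields a radius $\delta_1$ controlling inner products of unit Fr\'echet normals at nearby points of distinct $\Omega_i$'s, and with this $\gamma$, Lemma~\ref{l2} applied to the super-regular set $\Omega_1$ yields a radius $\delta_2$. I would take $\delta=\min\{\delta_1,\delta_2\}$ and restrict $x_0$ to a small enough ball around $\bar x$ (of radius proportional to $\delta(1-c)/(m+1)$) so that all subsequent iterates stay inside $B_\delta(\bar x)$, keeping both lemmas permanently usable.

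First I would establish the heart of the argument: the one-cycle contraction
\[
\|x_{(k+1)m+1}-x_{(k+1)m}\|\le c\,\|x_{km+2}-x_{km+1}\|.
\]
Setting $u:=x_{(k+1)m}-x_{(k+1)m+1}$, the cyclic index convention $\Omega_{m+1}=\Omega_1$ gives $x_{(k+1)m+1}\in P_{\Omega_1}(x_{(k+1)m})$, so by Lemma~\ref{lem1m} the vector $u$ lies in $N_{\Omega_1}(x_{(k+1)m+1})$. Similarly, each difference $x_{km+i+1}-x_{km+i}$ is a Fr\'echet normal to $\Omega_{i+1}$ at $x_{km+i+1}$. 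The key trick is to split
\[
\|u\|^2=\langle u,x_{(k+1)m}-x_{km+1}\rangle+\langle u,x_{km+1}-x_{(k+1)m+1}\rangle.
\]
The first piece telescopes as a sum of inner products between $u$ and the $m-1$ consecutive differences $x_{km+i+1}-x_{km+i}$ for $i=1,\ldots,m-1$; Lemma~\ref{l1} bounds each by $c'\|u\|\cdot\|x_{km+i+1}-x_{km+i}\|$, and condition \eqref{nonexpansive} dominates every factor by $\|x_{km+2}-x_{km+1}\|$, yielding an overall bound of $(m-1)c'\|u\|\cdot\|x_{km+2}-x_{km+1}\|$. The second piece is controlled by the super-regularity of $\Omega_1$ via Lemma~\ref{l2} applied with the normal $u$: it is at most $\gamma\|u\|\cdot\|x_{(k+1)m+1}-x_{km+1}\|$, which by the triangle inequality and \eqref{nonexpansive} is in turn at most $m\gamma\|u\|\cdot\|x_{km+2}-x_{km+1}\|$. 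Adding and cancelling $\|u\|$ delivers the contraction with factor $(m-1)c'+m\gamma<c$.

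With the one-cycle contraction in hand, the remaining work is bookkeeping. Setting $\alpha:=\|x_1-\bar x\|$, I would verify by induction on $k$ that $\|x_{km+i}-\bar x\|\le(m+1)\alpha(1-c^{k+1})/(1-c)$ for $i=2,\ldots,m+1$, which uses \eqref{nonexpansive} at the base step and the one-cycle contraction to propagate. Provided $\alpha$ is small enough (which follows from the choice of $x_0$), this uniform bound keeps every iterate in $B_\delta(\bar x)$ so the contraction argument remains valid at every cycle. Linear convergence then follows by telescoping: for $k>r=nm+i$,
\[
\|x_k-x_r\|\le\sum_{j=r}^{k-1}\|x_{j+1}-x_j\|\le m\|x_2-x_1\|\sum_{j=n}^{\infty}c^j\le\frac{m\alpha\,c^n}{1-c},
\]
so $(x_k)$ is Cauchy with limit $\tilde x$, the rate is $(\sqrt[m]{c})^{\,r}$ up to a constant after writing $c^n=c^{(r-i)/m}$, and $\tilde x\in\Omega_i$ for each $i$ because $x_{nm+i}\in\Omega_i$ and $\Omega_i$ is closed.

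The main obstacle I anticipate is the one-cycle contraction, and specifically the choice to insert the intermediate anchor $x_{km+1}\in\Omega_1$ in the splitting of $\|u\|^2$: this is what converts the asymptotic information of uniform regularity at $\bar x$ (Lemma~\ref{l1}) into a quantitative step-by-step estimate along the actual trajectory, and is the only spot where the two regularity ingredients (uniform regularity of the collection and super-regularity of the anchor set $\Omega_1$) must cooperate. Verifying that the normal cone structure survives through $m-1$ intermediate projections---and that \eqref{nonexpansive} is strong enough to make all $m-1$ inner products uniformly dominated by the single quantity $\|x_{km+2}-x_{km+1}\|$---is the technically delicate part; everything else is careful index arithmetic.
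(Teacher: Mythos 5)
Your argument is essentially a re-derivation of the paper's Theorem~\ref{newconvergence} for general $m$, and the engine --- the splitting of $\|u\|^2$ at the anchor $x_{km+1}$, the use of Lemmas~\ref{lem1m}, \ref{l1} and \ref{l2}, the induction keeping the iterates in $B_\delta(\bar x)$, and the final telescoping --- matches the paper's proof of that theorem step for step. The paper itself, however, proves Corollary~\ref{MR} by simply invoking Theorem~\ref{newconvergence} with $m=2$, after observing (in the remark following the theorem) that the two extra hypotheses of the theorem, namely \eqref{condition} and \eqref{nonexpansive}, are satisfied automatically in the alternating-projections case.

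This points to the one genuine gap in your proposal when read as a proof of the corollary: you use \eqref{nonexpansive} throughout as if it were a hypothesis (in the initial estimate \eqref{ans}, in dominating the inner products by $\|x_{km+2}-x_{km+1}\|$, and in bounding $\|x_{(k+1)m+1}-x_{km+1}\|$ by $m\|x_{km+2}-x_{km+1}\|$), but the corollary does not assume \eqref{nonexpansive}, so it must be verified. For $m=2$ this is immediate: since $x_{2k+3}\in P_{\Omega_1}(x_{2k+2})$ and $x_{2k+1}\in\Omega_1$, one has $\|x_{2k+3}-x_{2k+2}\|=d(x_{2k+2},\Omega_1)\le\|x_{2k+2}-x_{2k+1}\|$, which is exactly \eqref{nonexpansive} for $m=2$. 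Likewise \eqref{condition} reduces to $\hat c[\Omega_1,\Omega_2](\bar x)<1$, which is equivalent to uniform regularity by Corollary~\ref{C}(iii), so an admissible $c<1$ exists. With these two one-line observations added, your argument is complete and coincides with the paper's.
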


Now, we derive from Corollary \ref{MR} another convergence result of the averaged projection algorithm for a collection of $m$ sets. Given a collection of sets $\bold{\Omega}=\{\Omega_1,\Omega_2,\ldots,\Omega_m\}$ in $X$, we consider the collection $\bold{\Omega'}:=\{\Omega,L\}$ of two sets in $X^m$ given by \eqref{product}. For $x\in X$, denote $Ax:= (x,x,\ldots,x) \in L$.

\begin{lemma}\label{projectionrelation}
\begin{enumerate}
\item
For any $x \in X$,
$$
P_{\Omega}(Ax) = \left(P_{\Omega_1}(x),P_{\Omega_2}(x),\ldots,P_{\Omega_m}(x)\right).
$$
\item
For any $(x_1,x_2,\ldots,x_m)\in X^m$,
$$
P_{L}(x_1,x_2,\ldots,x_m) = A\left(\frac{x_1+x_2+\ldots+x_m}{m}\right).
$$
\end{enumerate}
\end{lemma}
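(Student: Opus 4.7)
The plan is to exploit the fact that on $X^m$ equipped with the Hilbert norm $\|(y_1,\ldots,y_m)\|=\bigl(\sum_i\|y_i\|^2\bigr)^{1/2}$, the squared distance decouples into a sum over components. No new variational machinery is needed; both identities follow from direct minimization arguments.

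For (i), observe that an arbitrary $z\in\Omega$ has the form $z=(\omega_1,\ldots,\omega_m)$ with $\omega_i\in\Omega_i$, and
$$
\|Ax-z\|^2=\sum_{i=1}^m\|x-\omega_i\|^2.
$$
Since the constraints $\omega_i\in\Omega_i$ are independent across $i$, minimizing the sum over $\Omega$ is equivalent to minimizing each summand over $\Omega_i$ separately. Hence $z\in P_\Omega(Ax)$ if and only if $\omega_i\in P_{\Omega_i}(x)$ for every $i$, which is exactly the asserted componentwise description (and incidentally yields $d(Ax,\Omega)^2=\sum_i d(x,\Omega_i)^2$).

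For (ii), every element of $L$ has the form $Au$ for some $u\in X$, so the problem reduces to minimizing
$$
\varphi(u):=\sum_{i=1}^m\|u-x_i\|^2
$$
over $u\in X$. Setting $\bar u:=\frac{1}{m}\sum_{i=1}^m x_i$, a direct expansion of $\|u-x_i\|^2=\|u-\bar u\|^2+2\langle u-\bar u,\bar u-x_i\rangle+\|\bar u-x_i\|^2$ and summing (the cross terms cancel because $\sum_i(\bar u-x_i)=0$) gives the ``parallel axis'' identity
$$
\varphi(u)=m\|u-\bar u\|^2+\sum_{i=1}^m\|\bar u-x_i\|^2,
$$
from which it is immediate that $\varphi$ is uniquely minimized at $u=\bar u$. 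Therefore $P_L(x_1,\ldots,x_m)=\{A\bar u\}$, as required.

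There is no real obstacle: both parts are standard Hilbert-space computations. The only point requiring mild care is that (i) is a set equality between possibly multivalued projection mappings (because the $\Omega_i$ are only assumed closed), whereas in (ii) the set $L$ is a closed linear subspace of the Hilbert space $X^m$, so the projection is automatically single-valued and the right-hand side is well defined without any qualification.
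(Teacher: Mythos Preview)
Your proof is correct and follows essentially the same route as the paper: part (i) is dispatched by the componentwise decoupling (the paper merely calls it ``straightforward''), and part (ii) reduces to minimizing $\sum_i\|u-x_i\|^2$ over $u\in X$. The only cosmetic difference is that the paper identifies the minimizer via the first-order optimality condition, whereas you use the parallel-axis expansion; both are one-line verifications.
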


\begin{proof}
The first assertion is straightforward (cf. \cite[Exercise 1.8]{ClaLedSteWol98}). To prove the second one, we consider the real-valued function $f:X\to \R$ defined by
$$
f(x):= \sum_{i=1}^m\norm{x-x_i}^2.
$$
It is obvious that $Ax\in P_{L}(x_1,x_2,\ldots,x_m)$ if and only if $x$ is a minimizer of $f$. The conclusion follows from the first order optimality condition.
\qed\end{proof}

\begin{corollary}[\cite{LewLukMal09}, Theorem~7.3]
Suppose that $\bold{\Omega}$ is uniformly regular at $\bar{x}\in\cap_{i=1}^m \Omega_i$. Then any sequence $(y_k)$ generated by algorithm (\ref{averagedP}) linearly converges to some point in $\cap_{i=1}^m \Omega_i$ provided that the initial point $y_0$ is sufficiently close to $\bar x$.
\end{corollary}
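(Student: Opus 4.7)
The plan is to reduce the convergence of the averaged projection algorithm in $X$ to the alternating projection result of Corollary~\ref{MR} applied in the product Hilbert space $X^m$ (with the standard Euclidean product norm) to the pair $\bold{\Omega}':=\{\Omega,L\}$ from \eqref{product}. The two main ingredients are Proposition~\ref{mTO2}, which transfers uniform regularity from $\bold{\Omega}$ at $\bar x$ to $\bold{\Omega}'$ at $\bar z:=A\bar x$, and Lemma~\ref{projectionrelation}, which identifies the averaged projection step in $X$ with a $P_\Omega$-then-$P_L$ step in $X^m$.

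First I would verify that both hypotheses of Corollary~\ref{MR} hold for $\bold{\Omega}'$ at $\bar z$. Uniform regularity comes for free from Proposition~\ref{mTO2}. For super-regularity, observe that $L$ is a closed linear subspace, hence convex; every closed convex set is super-regular at each of its points, since $y=P_L(x)$ yields $\langle z-y,x-y\rangle\le 0$ for all $z\in L$ by the standard projection characterization, so Definition~\ref{d3} is satisfied with any $\gamma>0$.

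Next I would link the two algorithms. Given $y_0$ close to $\bar x$, set $z_0:=Ay_0\in L$, which is close to $\bar z$ since $\|z_0-\bar z\|=\sqrt{m}\,\|y_0-\bar x\|$, and generate an alternating projection sequence in $X^m$ by $z_{2k+1}\in P_\Omega(z_{2k})$ and $z_{2k+2}\in P_L(z_{2k+1})$. A short induction using Lemma~\ref{projectionrelation}(i)--(ii) gives $z_{2k}=Ay_k$, where $(y_k)$ is the averaged projection sequence for $\bold{\Omega}$: if $z_{2k}=Ay_k$, then
$$
z_{2k+1}\in P_\Omega(Ay_k) = \bigl(P_{\Omega_1}(y_k),\ldots,P_{\Omega_m}(y_k)\bigr),
$$
and therefore
$$
z_{2k+2}\in P_L(z_{2k+1}) = A\!\left(\frac{1}{m}\sum_{i=1}^m P_{\Omega_i}(y_k)\right) = Ay_{k+1}.
$$

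Finally I would invoke Corollary~\ref{MR} to conclude that, for $y_0$ sufficiently close to $\bar x$, the sequence $(z_n)$ converges linearly to some $\tilde z\in\Omega\cap L$. Closedness of $L$ forces $\tilde z=A\tilde x$ for some $\tilde x\in X$, and $\tilde z\in\Omega$ then gives $\tilde x\in\cap_{i=1}^m\Omega_i$. Passing to the even subsequence, $\sqrt{m}\,\|y_k-\tilde x\|=\|Ay_k-A\tilde x\|\to 0$ linearly, which is the claimed convergence. The only step requiring real thought is the identification $z_{2k}=Ay_k$, but Lemma~\ref{projectionrelica relation} packages exactly this; the remainder is routine bookkeeping about the product norm and the closedness of $L$.
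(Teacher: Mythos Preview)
Your proposal is correct and follows essentially the same route as the paper: reduce to the two-set alternating projection result (Corollary~\ref{MR}) in $X^m$ via Proposition~\ref{mTO2} and Lemma~\ref{projectionrelation}, then read off convergence of $(y_k)$ from that of $(z_{2k})$. You are in fact slightly more careful than the paper, which invokes Corollary~\ref{MR} without explicitly checking that one of the two sets is super-regular; your observation that $L$, being a closed subspace, satisfies Definition~\ref{d3} trivially supplies this. One small nuance worth tidying: in Corollary~\ref{MR} the super-regular set is the one projected onto first in each cycle, whereas your sequence projects onto $\Omega$ first; simply shift the index (view $(z_n)_{n\ge1}$ as starting from $z_1\in\Omega$ and projecting onto $L$ first) and the hypothesis matches. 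Also fix the broken reference ``\texttt{projectionrelica relation}'' in your last paragraph.
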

\begin{proof}
Let $(z_n)$ be the sequence generated by the alternating projections for the two sets $\Omega$ and $L$ with the initial point $z_1:=Ay_1$. By Lemma \ref{projectionrelation},
$z_{2k}=Ay_k$, $k=1,2,\ldots$, for some sequence $(y_n)\subset X$.
At the same time, $\{\Omega,L\}$ is uniformly regular at $A\bar{x}$ by Proposition \ref{mTO2}. Therefore, when $y_0$ is sufficiently close to $\bar x$, Corollary \ref{MR} implies that the sequence $(z_n)$ linearly converges to some point $A\tilde{x}\in \Omega_1\cap \Omega_2$. It follows that the subsequence $(z_{2k}=Ay_k)$ also linearly converges to $A\tilde{x}$. Hence, $(y_k)$ linearly converges to $\tilde{x}\in \cap_{i=1}^m \Omega_i$.
\qed\end{proof}

\begin{acknowledgements}
The authors wish to thank the referee for helpful remarks and suggestions. 

The research was supported by the Australian Research Council, project DP110102011.
\end{acknowledgements}
\bibliographystyle{acm}
\bibliography{BUCH-Kr,Kruger,KR-tmp}
\end{document}